\newtheorem{observation}[theorem]{Observation}
\begin{document}
\title{Equitable 
%improper 
$d$-degenerate
choosability of graphs\thanks{The short version of the paper was accepted to be published in proceedings of IWOCA2020}}
%
%\titlerunning{Abbreviated paper title}
% If the paper title is too long for the running head, you can set
% an abbreviated paper title here
%
\author{Ewa Drgas-Burchardt\inst{1} \and
Hanna Furma\'nczyk\inst{2} \and
El\.zbieta Sidorowicz\inst{1}}
\authorrunning{E. Drgas-Burchardt, H. Furma\'nczyk, E. Sidorowicz}
% First names are abbreviated in the running head.
% If there are more than two authors, 'et al.' is used.
%
\institute{Faculty of Mathematics, Computer Science and Econometrics,\ University of Zielona G\'ora,\ Prof. Z. Szafrana 4a,\ 65-516 Zielona G\'ora,\ Poland
\email{\{E.Drgas-Burchardt,E.Sidorowicz\}@wmie.uz.zgora.pl}\\
\and
Institute of Informatics,\ Faculty of Mathematics, Physics and Informatics,\ University of Gda\'nsk,\ 80-309 Gda\'nsk,\ Poland\\
\email{hanna.furmanczyk@ug.edu.pl}}
\maketitle             % typeset the header of the contribution
\begin{abstract}
Let ${\mathcal D}_d$ be the class of $d$-degenerate graphs and let $L$ be a list assignment for a graph $G$. 
A colouring of $G$ such that every vertex receives a colour from its list and the subgraph induced by vertices coloured with one color is a $d$-degenerate graph is called the $(L,{\mathcal D}_d)$-colouring of $G$.
For a $k$-uniform list assignment $L$ and $d\in\mathbb{N}_0$, a graph $G$ is equitably $(L,{\mathcal D}_d)$-colorable if  there is an $(L,{\mathcal D}_d)$-colouring of $G$ such that the size of any colour class does not exceed $\left\lceil|V(G)|/k\right\rceil$.
An equitable $(L,{\mathcal D}_d)$-colouring is a generalization of an equitable list coloring, introduced by Kostochka at al., and an equitable list arboricity presented by Zhang. Such a model can be useful in the network decomposition where some structural properties on subnets are imposed.

In this paper we give a polynomial-time algorithm that for a given $(k,d)$-partition of $G$ with a $t$-uniform list assignment $L$ and $t\geq k$, returns its equitable $(L,\mathcal{D}_{d-1})$-colouring. In addition, we show that  3-dimensional grids are equitably  $(L,\mathcal{D}_1)$-colorable for any $t$-uniform list assignment $L$  where $t\geq 3$.

\keywords{equitable choosability\and  hereditary classes \and $d$-degenerate graph}
%\and equitable list point arboricity.}
\end{abstract}
\section{Motivation and preliminaries}
In last decades, a social network graphs, describing relationship in real life, started to be very popular and present everywhere. Understanding key structural properties of large-scale data networks started to be crucial for analyzing and optimizing their performance, as well as improving their security. This topic has been attracting attention of many researches, recently (see \cite{dragan,lei,miao,deg}). We consider one of problems connected with the decomposition of networks into smaller pieces fulfilling some structural properties. For example, we may desire that, for some security reason, the pieces are acyclic or even independent. This is because  of in such a piece we can easily and effectively identify a node failure since the local structure around such a node in this piece is so clear that it can be easily tested using some classic algorithmic tools \cite{deg}. Sometimes, it is also desirable that the sizes of pieces are balanced. It helps us to maintain the whole communication network effectively. Such a problem can be modeled by minimization problems in graph theory, called an \emph{equitable vertex arboricity} or an  \emph{equitable vertex colourability} of graphs. Sometimes we have some additional requirements on vertices/nodes that can be modeled by a list of available colours. So, we are interested in the list version, introduced by Kostochka, Pelsmajer and West \cite{Ko03} (an independent case), and by Zhang \cite{Zh16} (an acyclic case).

In colourability and arboricity models the properties of a network can be described in the language of the upper bound on the minimum degree, i.e. each colour class induces a graph whose each induced subgraph has the minimum degree bounded from above by zero or one, respectively. In the paper we consider the generalization of these models in which each colour class induces a graph whose each induced subgraph has the minimum degree bounded from above by some natural constant.
%Moreover, we show that our  results imply results about other models in which each colour class induces a graph that has some specific induced hereditary property. More precisely, by a class of graphs we mean a family of graphs that is closed under isomorphism. For convenience we assume that an \textit{empty graph} (a graph without vertices and without edges)  is an element of each class of graphs.
%The  paper  deals with the class of $d$-\textit{degenerate graphs}, denoted by ${\mathcal D}_d$, $d \in \mathbb{N}_0$, $\mathbb{N}_0=\mathbb{N}\cup \{0\}$. This is the class of all graphs $G$ such that $\delta(H)\le d$ for an arbitrary subgraph $H$ of $G$, where $\delta(H)$ denotes the minimum degree of $H$. In particular, $\mathcal{D}_0$ is the class of all edgeless graphs and $\mathcal{D}_1$ is a class of all forests.

Let   $\mathbb{N}_0 = \mathbb{N}\cup\{0\}$. For  $d\in \mathbb{N}_0$, the graph $G$ is $d$-{\it degenerate} if $\delta(H)\le d$ for any subgraph $H$ of $G$, where $\delta(H)$ denotes the minimum degree of $H$. The class of all $d$-degenerate graphs is denoted by ${\cal D}_d$. In particular, ${\cal D}_0$ is the class of all edgeless graphs and ${\cal D}_1$ is the class of all forests. A ${\cal D}_d$-{\it coloring} is a mapping $c:V(G)\rightarrow \mathbb{N}$ such that for each $i\in \mathbb{N}$ the set of vertices coloured with $i$ induces a $d$-degenerate graph.
A \textit{list assignment} $L$, for a graph $G$, is a mapping that assigns a nonempty subset of  $\mathbb{N}$ to each vertex $v\in V(G)$. Given $k\in \mathbb{N}$, a list assignment $L$ is $k$-\textit{uniform} if $|L(v)|=k$ for every $v\in V(G)$. 
%Let ${\mathcal P}$ be an arbitrary class of graphs and let $L$ be a list assignment for a graph $G$. 
Given $d\in \mathbb{N}_0$, a graph $G$ is  $(L,{\mathcal D}_d)$-{\it colourable} if there exists a colouring $c: V(G) \to \mathbb{N}$, such that $c(v) \in L(v)$ for each $v \in V(G)$, and  for each $ i\in \mathbb{N}$ the set of vertices coloured with $i$ induces a $d$-degenerate graph. Such a mapping $c$ is called  an {\it $(L,{\mathcal D}_d)$-colouring} of $G$. The $(L,{\mathcal D}_d)$-colouring of $G$ is also named as its $L$-colouring. 
If $c$ is any colouring of $G$, then its restriction to $V^\prime$, $V^\prime \subseteq V(G)$, is denoted by $c|_{V'}$. For a partially coloured graph $G$, let 
$N_G^{col}(d,v)=\{w\in N_G(v): w\; \mbox{has}\; d \;\mbox{neighbors coloured with}\; c(v)\}$, where $N_G(v)$ denotes the set of vertices of $G$  adjacent to $v$. We refer the reader to \cite{Dist00} for
terminology not defined in this paper.

Given $k\in \mathbb{N}$ and $d\in \mathbb{N}_0$, a graph $G$ is \textit{equitably $(k,{\mathcal D}_d)$-choosable} if for any $k$-uniform list assignment $L$ there is an $(L,{\mathcal D}_0)$-colouring of $G$ such that the size of any colour class does not exceed
$\left\lceil |V(G)|/k\right\rceil$.
The notion of equitable $(k,{\mathcal D}_0)$-choosability was introduced by Kostochka et al. \cite{Ko03} whereas the notation of equitable $(k,{\mathcal D}_1)$-choosability was introduced by Zhang \cite{Zh16}.

Let $k,d\in\mathbb{N}$.  A partition $S_1\cup \cdots \cup S_{\eta+1}$ of $V(G)$ is called a \textit{$(k,d)$-partition} of $G$ if $|S_1| \leq k$, and $|S_j|=k$ for $j\in \{2, \ldots ,\eta+1\}$, and for each $j\in \{2, \ldots ,\eta+1\}$, there is such an ordering $\{x_1^j,\ldots ,x_k^j\}$ of vertices of $S_j$ that
\vspace{-0.15cm}
\begin{equation}
|N_G(x_i^j) \cap (S_1\cup\cdots \cup S_{j-1})| \leq di-1,\label{part-cond}
\end{equation}
for every $i\in \{1, \ldots ,k\}$.
Observe that if $S_1\cup \cdots \cup S_{\eta+1}$ is a $(k,d)$-partition of $G$, then $\eta+1=\left\lceil |V(G)|/k\right\rceil$.
Moreover, immediately by the definition, each  $(k,d)$-partition of $G$ is also its $(k,d+1)$-partition. Surprisingly, the monotonicity of the $(k,d)$-partition with respect to the parameter $k$ is not so easy to analyze.
 We illustrate this fact by Example \ref{ex:Gq}. Note that for integers $k,d$ the complexity of deciding whether $G$ has a $(k,d)$-partition is unknown.
The main result of this paper is as follows.
\begin{theorem}\label{thm:1}
Let $k,d,t\in \mathbb{N}$ and $t\geq k$. If a graph $G$ has  a $(k,d)$-partition, then it is equitably $(t,{\mathcal D}_{d-1})$-choosable. Moreover, there is a polynomial-time algorithm that for any graph with a given $(k,d)$-partition and for any $t$-uniform list assignment $L$ returns an equitable $(L,{\mathcal D}_{d-1})$-colouring of $G$.
\end{theorem}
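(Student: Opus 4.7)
My approach would be to build the required colouring through a single greedy pass over the given $(k,d)$-partition, handling the classes $S_1,S_2,\ldots,S_{\eta+1}$ in order and, inside each $S_j$ with $j\geq 2$, processing the partition ordering in \emph{reverse}, i.e.\ from $x_k^j$ down to $x_1^j$. When it is $x_i^j$'s turn I would assign to it any colour $c\in L(x_i^j)$ such that (i) no vertex already coloured in $S_j$ carries colour $c$, and (ii) strictly fewer than $d$ already coloured neighbours of $x_i^j$ carry colour $c$. The initial class $S_1$ can be coloured by any greedy pass with pairwise distinct colours, which is possible since $|L(v)|=t\geq k\geq|S_1|$.

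\textbf{Feasibility.} For a vertex $x_i^j$ with $j\geq 2$, condition~(\ref{part-cond}) together with a pigeonhole count shows that at most $\lfloor(di-1)/d\rfloor=i-1$ colours already appear on $d$ or more neighbours of $x_i^j$ inside $S_1\cup\cdots\cup S_{j-1}$. In reverse order, exactly $k-i$ vertices of $S_j$ have already been coloured (with pairwise distinct colours), so the total number of colours forbidden by (i) or (ii) is at most $(i-1)+(k-i)=k-1$. Since $|L(x_i^j)|=t\geq k$, at least one admissible colour remains, so the greedy step never fails. Moreover, because all vertices already coloured inside $S_j$ carry colours different from $c$, condition~(ii) in fact counts same-colour neighbours only inside $S_1\cup\cdots\cup S_{j-1}$, and thus correctly enforces the $(d-1)$-degeneracy constraint as seen from $x_i^j$.

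\textbf{Equitability and degeneracy.} Since each $S_j$ contributes at most one vertex to every colour class and there are exactly $\eta+1=\lceil|V(G)|/k\rceil$ such classes, every colour class has size at most $\lceil|V(G)|/k\rceil$. To establish $(d-1)$-degeneracy of a colour class $C$, order its vertices by increasing partition index $j$ (unambiguous, since $C$ meets each $S_j$ in at most one point). For $v=x_i^j\in C$, all of its earlier neighbours in $C$ lie in $S_1\cup\cdots\cup S_{j-1}$, and by the choice rule they number at most $d-1$; this exhibits a $(d-1)$-degeneracy ordering of $C$, whence $C\in\mathcal{D}_{d-1}$.

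\textbf{Main obstacle and complexity.} The one genuinely subtle point is the reversal of the partition order inside each $S_j$: the ``degeneracy-forbidden'' list grows with $i$, while the ``distinctness-forbidden'' list grows with the step number, and the reversal balances them so that their sum is the constant $k-1$ for every $i$. Without this reversal, the naive greedy would fail for large $i$. Once the ordering is fixed, each step inspects only the list of one vertex and its already coloured neighbours, so the whole procedure runs in time polynomial in $|V(G)|$, $|E(G)|$ and~$t$.
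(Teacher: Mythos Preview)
Your argument establishes the case $t=k$ only; it does not prove equitable $(t,\mathcal{D}_{d-1})$-choosability for $t>k$. The definition of equitable $(t,\mathcal{D}_{d-1})$-choosability requires that, for a $t$-uniform list assignment, every colour class have size at most $\lceil |V(G)|/t\rceil$. Your equitability paragraph shows instead that each colour class has size at most $\eta+1=\lceil |V(G)|/k\rceil$, because you only ensure that each of the $\eta+1$ blocks $S_j$ of the $(k,d)$-partition contributes at most one vertex per colour. When $t>k$ one has $\lceil |V(G)|/t\rceil<\lceil |V(G)|/k\rceil$ in general, so your bound is too weak. Concretely, if $|V(G)|=20$, $k=2$, $t=3$, your argument allows colour classes of size up to $10$, whereas equitable $(3,\mathcal{D}_{d-1})$-choosability demands size at most $7$.

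This is exactly the difficulty the paper isolates: the greedy-per-block argument (the paper's Proposition~\ref{l:2}) handles $t=k$, but a $(k,d)$-partition need not yield a $(t,d)$-partition for $t>k$ (Example~\ref{ex:Gq}), so one cannot simply rerun your argument with $t$ in place of $k$. The paper's proof overcomes this by a substantially more intricate scheme: writing $t=\gamma k+r$ and $|V(G)|=\beta t+r_2$, it first colours an initial segment of $r_2+\rho k+x$ vertices in $k$-blocks (where $\beta r=\rho k+x$), then \emph{reorders} these coloured vertices so that every $r$ consecutive ones are differently coloured, and finally colours the remaining $\beta\gamma k$ vertices in groups of $\gamma k$, deleting from each group's lists the $r$ colours of an associated reordered segment. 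This produces a partition of $V(G)$ into $\beta+1=\lceil |V(G)|/t\rceil$ rainbow sets, which is what equitability with parameter $t$ requires. Your reversal idea is present in the paper's scheme (it is what makes the per-$k$-block colouring feasible), but the additional reorder/modify machinery is the missing ingredient needed to pass from $k$-equitability to $t$-equitability.
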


The first statement of Theorem \ref{thm:1} generalizes the result obtained in \cite{DrFrDy18} for  $d\in \{1,2\}$. In this paper we  present an algorithm that confirms both, the first and second statements of Theorem \ref{thm:1} for all possible $d$. The algorithm, given in Section \ref{algor}, for a given $(k,d)$-partition of  $G$ with $t$-uniform list assignment $L$ returns its  equitable $(L,\mathcal{D}_{d-1})$-colouring.
Moreover, in Section \ref{grids} we give a polynomial-time algorithm that for a given 3-dimensional grid finds its $(3,2)$-partition, what, in consequence, implies $(t,\mathcal{D}_1)$-choosability of 3-dimensional grids for every $t\geq 3$. 

\section{The proof of Theorem \ref{thm:1}} \label{algor}
\subsection{Background}
For $S\subseteq V(G)$ by $G-S$ we denote a subgraph of $G$ induced by $V(G)\setminus S$. We start with a generalization  of some results given in \cite{Ko03,Pe04,Zh16} for classes ${\mathcal D}_0$ and ${\mathcal D}_1$. 
\begin{proposition}\label{l:2}
Let  $k,d\in \mathbb N$ and let $S$ be a set of distinct vertices $x_1, \ldots ,x_k$ of a graph $G$. If $G-S$ is equitably $(k,{\mathcal D}_{d-1})$-choosable and 
\vspace{-0.1cm}
$$|N_G(x_i)\setminus S|\le di-1$$
holds for every $i\in \{1, \ldots ,k\}$,
then $G$ is equitably $(k,{\mathcal D}_{d-1})$-choosable.
\end{proposition}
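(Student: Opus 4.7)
The plan is to build an equitable $(L,\mathcal{D}_{d-1})$-colouring of $G$ for an arbitrary $k$-uniform list assignment $L$ by first colouring $G-S$ using the hypothesis and then extending to $S$ via a careful greedy assignment in reverse index order. Concretely, first I would apply the equitable $(k,\mathcal{D}_{d-1})$-choosability of $G-S$ to the restricted list assignment $L\!\mid_{V(G-S)}$, obtaining a colouring $c'$ in which each colour class is $(d-1)$-degenerate and has size at most $\lceil|V(G-S)|/k\rceil$. The target bound to beat on $G$ is $\lceil|V(G)|/k\rceil=\lceil|V(G-S)|/k\rceil+1$, so every colour class may acquire at most one additional vertex; this forces the extension $c\!\mid_S$ to use pairwise distinct colours from the lists.

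Next I would call a colour $\alpha\in L(x_i)$ \emph{good for $x_i$} if $x_i$ has at most $d-1$ neighbours in $V(G-S)$ coloured $\alpha$ under $c'$. The key counting step is the observation that the set of \emph{bad} colours for $x_i$, namely those receiving $\ge d$ neighbours of $x_i$ in $G-S$, has size at most
\[
\left\lfloor\frac{|N_G(x_i)\setminus S|}{d}\right\rfloor\le \left\lfloor\frac{di-1}{d}\right\rfloor=i-1.
\]
Hence $x_i$ has at least $k-(i-1)=k-i+1$ good colours in its list. I would then process the vertices in reverse order $x_k,x_{k-1},\ldots,x_1$; when $x_i$ is reached, only $k-i$ colours have been used by $x_{i+1},\ldots,x_k$, so at least one good colour in $L(x_i)$ remains unused, and I assign it to $x_i$.

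To finish, I would verify that the resulting full colouring $c$ is an $(L,\mathcal{D}_{d-1})$-colouring of $G$. Because all $x_i$'s receive pairwise distinct colours, each colour class $C$ of $c$ is obtained from its counterpart $C'$ under $c'$ by adding at most one vertex $x_i\in S$. For any induced subgraph $H\subseteq C$, either $x_i\notin H$, in which case $H\subseteq C'$ is $(d-1)$-degenerate by assumption, or $x_i\in H$ and the degree of $x_i$ in $H$ is at most the number of neighbours of $x_i$ in $C$, which is at most $d-1$ because the chosen colour was good. Either way $\delta(H)\le d-1$, so $C$ is $(d-1)$-degenerate. Since each colour class grew by at most one vertex, its size is at most $\lceil|V(G-S)|/k\rceil+1=\lceil|V(G)|/k\rceil$, establishing the proposition.

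The main (modest) obstacle is the counting identity $\lfloor(di-1)/d\rfloor=i-1$, which is exactly what makes the reverse-order greedy extension succeed with one good unused colour to spare at every step; the geometric intuition is that the hypothesis $|N_G(x_i)\setminus S|\le di-1$ is tuned so that the worst case leaves room for precisely one extra colour beyond the $k-i$ already consumed by the later-processed vertices of $S$.
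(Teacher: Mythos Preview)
Your proposal is correct and follows essentially the same approach as the paper: restrict $L$ to $G-S$, colour $G-S$ equitably, then extend greedily in the order $x_k,x_{k-1},\ldots,x_1$, choosing for each $x_i$ a colour from $L(x_i)$ that is not among $c(x_{i+1}),\ldots,c(x_k)$ and is used at most $d-1$ times on $N_G(x_i)\setminus S$. The only cosmetic difference is that you phrase the counting via the number of \emph{good} colours ($\ge k-i+1$) and then subtract the $k-i$ already used, whereas the paper first isolates the $\ge i$ colours in $L(x_i)$ avoiding $c(x_{i+1}),\ldots,c(x_k)$ and then argues one of them must be good; both arrive at the same pigeonhole inequality $\lfloor (di-1)/d\rfloor=i-1$.
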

\begin{proof}
Let  $L$ be a $k$-uniform list assignment for $G$ and let  $c$ be an equitable $(L|_{V(G)\setminus S},{\mathcal D}_{d-1})$-colouring of $G-S$. Thus each colour class in $c$ has the cardinality at most $\left\lceil (|V(G)|-k)/k\right\rceil$ and induces in $G-S$, and consequently in $G$, a graph from ${\mathcal D}_{d-1}$. We extend $c$ to $(V(G)\setminus S) \cup \{x_k\}$ by assigning to $x_k$ a colour from $L(x_k)$ that is used on vertices in $N_G(x_k)\setminus S$ at most $d-1$ times. Such a colour always exists because 
$|N_G(x_k)\setminus S|\le dk-1 \hbox{ and } |L(x_k)|=k.$
Next, we colour vertices $x_{k-1}, \ldots, x_1$, sequentially, assigning to $x_i$ a colour from its list that is different from colours of all vertices $x_{i+1}, \ldots ,x_k$ and that is used at most $d-1$ times in $N_G(x_i)\setminus S$. Observe that there are at least $i$ colours in $L(x_i)$ that are different from $c(x_{i+1}), \ldots ,c(x_{k})$, and, since $|N_G(x_i)\setminus S|\le di-1$, $i\in \{1, \ldots ,k-1\}$, then such a choice of $c(x_i)$ is always possible. Next, the colouring procedure forces that 
 the cardinality of every colour class in the extended colouring $c$ is at most $\left\lceil |V(G)|/k\right\rceil$. Let $G_i=G[(V(G)\setminus S)\cup \{x_{i}, \ldots ,x_k\}]$. It is easy to see that for each $i\in \{1, \ldots ,k\}$ each colour class in $c|_{V(G_i)}$ induces a graph belonging to  ${\mathcal D}_{d-1}$. In particular this condition is satisfied for $G_1$, i.e. for $G$. Hence $c$ is an equitable $( L,{\mathcal D}_{d-1})$-colouring of $G$ and $G$ is equitably $(k,{\mathcal D}_{d-1})$-choosable.
 \hfill$\Box$
\end{proof}

Note that if a graph $G$ has a $(k,d)$-partition, then one can prove that $G$ is  equitably $(k,{\mathcal D}_{d-1})$-choosable by applying  Proposition \ref{l:2} several times. In general, the equitable $(k,{\mathcal D}_{d-1})$-choosability of $G$ does not imply the equitable $(t,{\mathcal D}_{d-1})$-choosability of $G$ for $t\geq k$. Unfortunately, if $G$ has a  $(k,d)$-partition, then $G$ may have  neither a $(k+1,d)$-partition nor  a $(k-1,d)$-partition.The infinite family of graphs defined in Example 1 confirms of the last fact.

\begin{example} Let $q\in \mathbb{N}$ and let ${G_1,\ldots,G_{2q+1}}$ be vertex-disjoint copies of $K_6$ such that $V(G_i)=\{v_1^i,\ldots ,v_6^i\}$ for $i\in \{1, \ldots 2q+1\}$. Let $G(q)$ $($cf. Fig. \ref{fig:exg2}$)$ be the graph resulted by  adding to ${G_1,\ldots,G_{2q+1}}$  edges that join  vertices of  $G_i$ with vertices of $G_{i-1}$, $i\in \{2,\ldots,2q+1\}$, in the following way:
$$
\begin{array}{ll}
 \mbox{for } i \mbox{ even:} &\mbox{for } i \mbox{ odd:} \\
  N_{G_{i-1}}(v_1^i)= \emptyset   &  N_{G_{i-1}}(v_1^i)=\{v_2^{i-1},v_3^{i-1},v_4^{i-1},v_5^{i-1},v_6^{i-1}\}\\
   N_{G_{i-1}}(v_2^i)=\{v_1^{i-1}\} & N_{G_{i-1}}(v_2^i)=\{v_1^{i-1},v_4^{i-1},v_5^{i-1},v_6^{i-1}\\
   N_{G_{i-1}}(v_3^i)=\{v_2^{i-1},v_3^{i-1}\}& N_{G_{i-1}}(v_3^i)=\{v_1^{i-1},v_2^{i-1},v_3^{i-1}\}\\
   N_{G_{i-1}}(v_4^i)=\{v_1^{i-1},v_2^{i-1},v_3^{i-1}\} & N_{G_{i-1}}(v_4^i)=\{v_2^{i-1},v_3^{i-1}\}\\
   N_{G_{i-1}}(v_5^i)=\{v_1^{i-1},v_4^{i-1},v_5^{i-1},v_6^{i-1}\} & N_{G_{i-1}}(v_5^i)=\{v_1^{i-1}\}\\
   N_{G_{i-1}}(v_6^i)=\{v_2^{i-1},v_3^{i-1},v_4^{i-1},v_5^{i-1},v_6^{i-1}\}\ & N_{G_{i-1}}(v_6^i)=\emptyset 
\end{array}
$$
\vspace{-0.3cm}
\begin{figure}[h]
    \centering
    \begin{tikzpicture}
  \matrix (net)
    [matrix of nodes,%
    nodes in empty cells, 
    nodes={inner sep=0pt,circle, fill, minimum size=4pt,draw},
    column sep={2.5cm,between origins},
    row sep={0.5cm,between origins}]
  {
      & & & & \\
      & & & & \\
      & & & & \\
      & & & & \\
      & & & & \\
      & & & & \\
  };
  \foreach \a in {1,...,6}
    \node[ left=0.2cm] at (net-\a-1) {$v_\a^1$};
  \foreach \a in {1,2,3,4,5}
    \node[left=0.2cm] at (net-1-\a) {$v_1^\a$};

  \foreach \a in {2,4,5}
    \node[right=0.2cm] at (net-6-\a) {$v_6^\a$};
  \node[left=0.2cm] at (net-6-3) {$v_6^3$};
   \foreach \a in {1,...,5}  
  \node[above right] at (net-1-\a) {$K_6$};
  \begin{scope}[on background layer]
    \foreach \a in {1,...,5}
    \node [rectangle,draw,rounded corners=0.65cm,inner sep=0.7cm,
      fit=(net-1-\a) (net-2-\a) (net-3-\a) (net-4-\a) (net-5-\a) (net-6-\a)] {};
  \end{scope};
  \draw (net-1-1) -- (net-2-2);
  \draw (net-1-1) -- (net-4-2);
  \draw (net-1-1) -- (net-5-2);
  \draw (net-2-1) -- (net-3-2);
  \draw (net-2-1) -- (net-4-2);
  \draw (net-2-1) -- (net-6-2);
  \draw (net-3-1) -- (net-3-2);
  \draw (net-3-1) -- (net-4-2);
  \draw (net-3-1) -- (net-6-2);
  \draw (net-4-1) -- (net-5-2);
  \draw (net-4-1) -- (net-6-2);
  \draw (net-5-1) -- (net-5-2);
  \draw (net-5-1) -- (net-6-2);
  \draw (net-6-1) -- (net-5-2);
  \draw (net-6-1) -- (net-6-2);
  \draw (net-1-2) -- (net-2-3);
  \draw (net-1-2) -- (net-3-3);
  \draw (net-1-2) -- (net-5-3);
  \draw (net-2-2) -- (net-1-3);
  \draw (net-2-2) -- (net-3-3);
  \draw (net-2-2) -- (net-4-3);
  \draw (net-3-2) -- (net-1-3);
  \draw (net-3-2) -- (net-3-3);
  \draw (net-3-2) -- (net-4-3);
  \draw (net-4-2) -- (net-1-3);
  \draw (net-4-2) -- (net-2-3);
  \draw (net-5-2) -- (net-1-3);
  \draw (net-5-2) -- (net-2-3);
  \draw (net-6-2) -- (net-1-3);
  \draw (net-6-2) -- (net-2-3);
  \draw (net-1-3) -- (net-2-4);
  \draw (net-1-3) -- (net-4-4);
  \draw (net-1-3) -- (net-5-4);
  \draw (net-2-3) -- (net-3-4);
  \draw (net-2-3) -- (net-4-4);
  \draw (net-2-3) -- (net-6-4);
  \draw (net-3-3) -- (net-3-4);
  \draw (net-3-3) -- (net-4-4);
  \draw (net-3-3) -- (net-6-4);
  \draw (net-4-3) -- (net-5-4);
  \draw (net-4-3) -- (net-6-4);
  \draw (net-5-3) -- (net-5-4);
  \draw (net-5-3) -- (net-6-4);
  \draw (net-6-3) -- (net-5-4);
  \draw (net-6-3) -- (net-6-4);
  \draw (net-1-4) -- (net-2-5);
  \draw (net-1-4) -- (net-3-5);
  \draw (net-1-4) -- (net-5-5);
  \draw (net-2-4) -- (net-1-5);
  \draw (net-2-4) -- (net-3-5);
  \draw (net-2-4) -- (net-4-5);
  \draw (net-3-4) -- (net-1-5);
  \draw (net-3-4) -- (net-3-5);
  \draw (net-3-4) -- (net-4-5);
  \draw (net-4-4) -- (net-1-5);
  \draw (net-4-4) -- (net-2-5);
  \draw (net-5-4) -- (net-1-5);
  \draw (net-5-4) -- (net-2-5);
  \draw (net-6-4) -- (net-1-5);
  \draw (net-6-4) -- (net-2-5);
\end{tikzpicture}
    \caption{A draft of graph $G(2)$ from Example \ref{ex:Gq}.}
    \label{fig:exg2}
\end{figure}
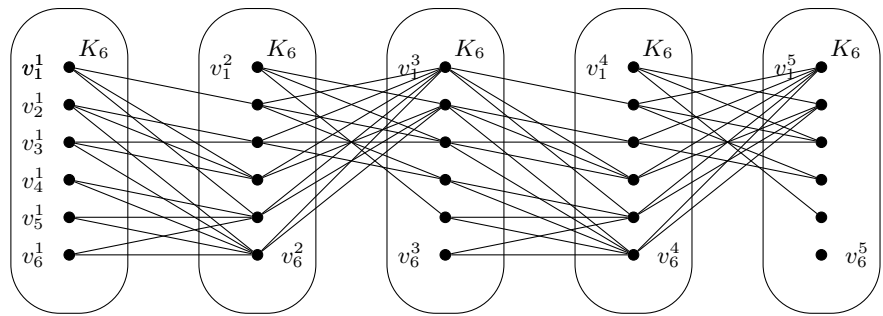

The construction of  $G(q)$ immediately implies that for every $q\in \mathbb{N}$ the graph $G(q)$ has a $(6,1)$-partition. Also, observe  that $deg_{G(q)}(v)\geq 5$ for each vertex $v$ of $G(q)$. Suppose that $G(q)$ has a $(5,1)$-partition $S_1\cup\cdots\cup S_{\eta+1}$  with $S_{\eta+1}=\{x_1^{\eta+1}, \ldots ,x_5^{\eta+1}\}$ such that $|N_{G(q)}(x_i^{\eta+1}) \cap (S_1\cup\cdots \cup S_{\eta})| \leq  i-1$. Thus $|N_{G(q)}(x_1^{\eta+1}) \cap (S_1\cup\cdots \cup S_{\eta})| = 0$ and consequently $deg_{G(q)}(x_1^{\eta+1})\le 4$, contradicting our previous observation. Hence, $G(q)$ has no $(5,1)$-partition. 
Next, we will show that $G(q)$ has no $(7,1)$-partition for $q\geq 2$. 

\begin{proposition}\label{prop:1}
For every integer $q$ satisfying $q\geq 2$ the graph $G(q)$ constructed in {\rm Example \ref{ex:Gq}}  has no $(7,1)$-partition.
\end{proposition}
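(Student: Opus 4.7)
My plan is to argue by contradiction: if a $(7,1)$-partition $S_1\cup\dots\cup S_{\eta+1}$ of $G(q)$ exists, then for every $j\in\{2,\dots,\eta+1\}$ the definition forces $|N_{G(q)}(x_1^j)\cap(S_1\cup\dots\cup S_{j-1})|\le 0$, so all neighbours of $x_1^j$ lie inside $S_j$ and consequently $deg_{G(q)}(x_1^j)\le |S_j|-1\le 6$. Since the blocks are pairwise disjoint, the $\eta$ vertices $x_1^2,\dots,x_1^{\eta+1}$ are pairwise distinct vertices of degree at most $6$ in $G(q)$. The strategy is therefore to count: I will show that $G(q)$ contains far fewer than $\eta$ vertices of degree $\le 6$.

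The first concrete step is to tabulate $deg_{G(q)}(v_a^i)$ for every $a\in\{1,\dots,6\}$ and $i\in\{1,\dots,2q+1\}$. Each $v_a^i$ contributes $5$ from the clique $G_i\cong K_6$; the contribution of $G_{i-1}$ (when $i\ge 2$) is read directly off the definition and equals $a-1$ if $i$ is even and $6-a$ if $i$ is odd; and the contribution of $G_{i+1}$ (when $i\le 2q$) is obtained by counting how many of the six lists $N_{G_i}(v_b^{i+1})$, $b=1,\dots,6$, contain $v_a^i$. A short inspection of both parity cases shows that this outgoing contribution equals $3$ for $a\in\{1,2,3\}$ and $2$ for $a\in\{4,5,6\}$, irrespective of the parity of $i$. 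Assembling the three contributions and treating the boundary copies $G_1$ and $G_{2q+1}$ separately, I obtain $deg_{G(q)}(v_a^i)\ge 7$ for every vertex except $v_5^{2q+1}$ (of degree $6$) and $v_6^{2q+1}$ (of degree $5$). In particular, $G(q)$ has exactly two vertices of degree at most $6$.

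To conclude, for $q\ge 2$ we have $|V(G(q))|=12q+6\ge 30$, so $\eta+1=\lceil(12q+6)/7\rceil\ge 5$, hence $\eta\ge 4$. Since we exhibited only two vertices of degree $\le 6$ in $G(q)$, we cannot choose $\eta$ pairwise distinct candidates for $x_1^2,\dots,x_1^{\eta+1}$, contradicting the first paragraph. Therefore $G(q)$ admits no $(7,1)$-partition.

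The only real obstacle in this proof is the degree enumeration of the second paragraph, because it requires handling the two parities of $i$ and separating the boundary copies $G_1,G_{2q+1}$ from the interior ones; everything else follows immediately from the definition of a $(k,d)$-partition and an arithmetic lower bound on $\eta$.
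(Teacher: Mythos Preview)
Your argument has a genuine gap in the very first step. From
\[
|N_{G(q)}(x_1^j)\cap(S_1\cup\cdots\cup S_{j-1})|\le 0
\]
you conclude that \emph{all} neighbours of $x_1^j$ lie in $S_j$, and hence that $deg_{G(q)}(x_1^j)\le 6$. This is not what the inequality says: it forbids neighbours of $x_1^j$ in the \emph{earlier} blocks $S_1,\dots,S_{j-1}$ only. Nothing prevents $x_1^j$ from having many neighbours in the \emph{later} blocks $S_{j+1},\dots,S_{\eta+1}$. Concretely, look at the $(6,1)$-partition $S_j=V(G_j)$ that the construction provides: the vertex $x_1^2=v_1^2$ satisfies the partition condition (no neighbours in $G_1$), yet $deg_{G(q)}(v_1^2)=5+0+3=8$, coming from five clique edges in $G_2$ and three neighbours in $G_3$. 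So your claim ``$deg_{G(q)}(x_1^j)\le 6$ for every $j\ge 2$'' is false in general, and the counting argument built on it collapses.

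The only index for which your reasoning is valid is $j=\eta+1$, where there are no later blocks; that gives exactly one vertex of degree at most $6$, and since your own degree table shows $G(q)$ has two such vertices, no contradiction arises at this point. The paper's proof starts precisely here: from $deg_{G(q)}(x_1^{\eta+1})\le 6$ it pins down $x_1^{\eta+1}\in\{v_5^{2q+1},v_6^{2q+1}\}$, deduces $V(G_{2q+1})\subseteq S'_{\eta+1}$, identifies the seventh vertex of $S'_{\eta+1}$ as $v_1^{2q}$ or $v_2^{2q}$, and then repeats the analysis in the residual graph $G'=G-S'_{\eta+1}$ for $S'_\eta$ to reach a contradiction in each case. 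Your degree enumeration is correct and useful as the opening move, but the proof needs this two-layer structural peel-off rather than a global count.
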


\begin{proof}
Let $G=G(q)$ and suppose that $S^\prime_1\cup \cdots \cup S^\prime_{\eta+1}$ is a $(7,1)$-partition of $G$. %Since $|V(G)|=42$, we have $|S_1|=7$ and $\eta=5$.
  Let  $x_1^{\eta+1},x_2^{\eta+1},\ldots, x_7^{\eta+1}$ be an ordering of vertices of $S^\prime_{\eta+1}$ such that $|N_G(x_i^{\eta+1}) \cap (S^\prime_1\cup\cdots \cup S^\prime_{\eta})| \leq  i-1$. Thus  $\deg_G(x_1^{\eta+1})\le 6$. Since for every vertex $v\in V(G)\setminus \{v_5^{2q+1},v_6^{2q+1}\}$ it holds that $\deg_G(v)\ge 7$, we have $x_1^{\eta+1}=v_6^{2q+1}$ or $x_1^{\eta+1}=v_5^{2q+1}$ and furthermore $N_G(x_1^{\eta+1})\subseteq S^\prime_{\eta+1}$.  So, $\{v_1^{2q+1},\ldots,v_6^{2q+1}\}\subseteq S^\prime_{\eta+1}$ and we must recognize the last vertex of $S^\prime_{\eta+1}$. Since every vertex in $S^\prime_{\eta+1}$ has at most $6$ neighbors outside $S^\prime_{\eta+1}$, it follows that either $v_1^{2q}\in S^\prime_{\eta+1}$ or $v_2^{2q}\in S^\prime_{\eta+1}$.
Let $G^\prime=G-S^\prime_{\eta+1}$, $S^\prime_{\eta}=\{x_1^{\eta},x_2^{\eta},\ldots, x_7^{\eta}\}$, and $|N_{G^\prime}(x_i^{\eta}) \cap (S^\prime_1\cup\cdots \cup S^\prime_{\eta-1})| \leq  i-1$.

\noindent {\it Case} 1. $v_1^{2q}\in S^\prime_{\eta+1}$

Since $\deg_{G^\prime}(x_1^{\eta})\le 6$, we have that either $x_1^{\eta}=v_2^{2q}$ or $x_1^{\eta}=v_3^{2q}$. In the first case,  $\{v_1^{2q-1},v_2^{2q},\ldots,v_6^{2q}\}\subseteq S^\prime_{\eta}$ and we must recognize the last vertex of $S^\prime_{\eta}$. However, $v_1^{2q-1}$ has 10 neighbors outside $\{v_1^{2q-1},v_2^{2q},\ldots,v_6^{2q}\}$. It contradicts the condition $v_1^{2q-1}\in S^\prime_{\eta}$, since every vertex in $S^\prime_{\eta}$ can have at most 6 neighbors outside.  In the second case, $\{v_2^{2q-1},v_3^{2q-1},v_2^{2q},\ldots,v_6^{2q}\}= S^\prime_{\eta}$. Similarly, we can observe that $v_2^{2q-1}$ has $8$ neighbors outside $S^\prime_{\eta}$, which contradicts the fact $v_2^{2q-1}\in S^\prime_{\eta}$.

\noindent {\it Case} 2. $v_2^{2q}\in S^\prime_{\eta+1}$

Since $\deg_{G^\prime}(x_1^{\eta})\le 6$, we have that either $x_1^{\eta}=v_1^{2q}$ or $x_1^{\eta}=v_3^{2q}$. By the same argument as in \emph{Case} $1$ we can observe that $x_1^{\eta}\neq v_3^{2q}$, and so we may assume that $x_1^{\eta}=v_1^{2q}$ and $\{v_1^{2q},v_3^{2q},\ldots ,v_6^{2q}\}\subseteq S^\prime_{\eta}$. Thus we  must recognize two last vertices of $S^\prime_{\eta}$. Since each of $v_1^{2q-1},v_2^{2q-1},v_3^{2q-1}$ has at least $8$ neighbors outside $\{v_1^{2q},v_3^{2q},\ldots ,v_6^{2q}\}$, we have $\{v_1^{2q-1},v_2^{2q-1},v_3^{2q-1}\}\cap S^\prime_{\eta}=\emptyset $.  Hence we  conclude that $v^{2q}_3,v^{2q}_4,v^{2q}_5,v^{2q}_6$ have at least two neighbors outside $S^\prime_{\eta}$, and so no of them is $x_2^{\eta}$, because $x_2^{\eta}$ has at most one neighbor  outside $S^\prime_{\eta}$. Furthermore, every vertex in $V(G^\prime)\setminus \{v_1^{2q},v_3^{2q},\ldots ,v_6^{2q}\}$ has at least $5$ neighbors outside $\{v_1^{2q},v_3^{2q},\ldots ,v_6^{2q}\}$, thus also none of them is $x_2^{\eta}$. Since $x_2^{\eta}$ does not exist, the $(7,1)$-partition of $G$ does not exist either.
\end{proof}
\label{ex:Gq}
\end{example}

\subsection{Algorithm}\label{subal}
Now we are ready to present the algorithm that confirms both statements of Theorem \ref{thm:1}. Note that Proposition \ref{l:2} and the induction procedure could be used to prove the first statement of Theorem \ref{thm:1} for $t=k$ but   this approach  seems to be useless for $t> k$, as we have observed in Example \ref{ex:Gq}. 
Mudrock et al. \cite{mudrock} proved the lack of monotonicity for the equitable $(k,\mathcal{D}_0)$-choosability with respect to the parameter $k$.  It motivates our approach for solving the problem.
To simplify understanding we give the main idea of the algorithm  presented in the further part. It can be expressed in a few steps (see also Fig. \ref{fig:my_label}):
\begin{itemize}
    \item on the base of the given $(k,d)$-partition $S_1\cup \cdots \cup S_{\eta+1}$ of $G$, we create the list $S$, consisting of the elements from $V(G)$, whose order corresponds to the order in which the colouring is expanded to successive vertices in each $S_j$ (cf. the proof of Proposition \ref{l:2});
    \item let $|V(G)|=\beta t +r_2$, $1 \leq r_2 \leq t$; we colour $r_2$ vertices from the beginning of $S$ taking into account the lists of available colours; we delete the colour assigned to $v$ from the lists of available colours for vertices from $N^{col}_G(d,v)$;
    \item let $|V(G)|=\beta (\gamma k +r)+r_2=\beta \gamma k+ (\rho k +x) +r_2$; observe that $r \equiv t \pmod k$ and $\rho k +x\equiv 0 \pmod r$; we colour $\rho k +x$ vertices taking into account the lists of available colours in such a way that every sublist of length $k$ is formed by vertices coloured differently (consequently, every sublist of length $r$ is coloured differently); we divide the vertices colored here into $\beta$ sets  each one of cardinality $r$; we delete $c(v)$ from the lists of vertices from $N^{col}_G(d,v)$;
    \item we extend the list colouring into the uncoloured $\beta\gamma k$ vertices by colouring $\beta$ groups of $\gamma k$ vertices; first, we associate each group of $\gamma k$ vertices with a set of $r$ vertices coloured  in  the previous step (for different groups these sets are disjoint); next, we color the vertices of each of the group using  $\gamma k$ different colors that are also different from the colors of $r$ vertices of the set associated with this group;
    \item our final equitable list colouring is the consequence of a partition of $V(G)$ into $\beta+1$ coloured sets, each one of size at most $t$ and each one formed by vertices coloured differently.
    \end{itemize}
   
\begin{figure}[htb]
    \centering
   \begin{tikzpicture}
  [x=1.2cm,y=1cm,ovals/.style={rectangle,draw,rounded corners=0.25cm,minimum width=0.5cm,minimum height=2cm}]
  \node[ovals,minimum height=1cm] at (0,-0.5) {$r_1$};
  \node[ovals] at (1,0) {$k$};
  \node[ovals] at (2,0) {$k$};
  \node at (3.5,0) {$\dots$};
  \node[ovals] at (5,0) {$k$};
  \node[ovals] at (6,0) {$k$};
  \node at (7.5,0) {$\dots$};
  \node[ovals] at (9,0) {$k$};
  \node at (0,-2) {$S_1$};
  \node at (2.5,-2) {$\dots$};
  \node at (5,-2) {$S_{\eta-\beta\gamma+1}$};
  \node at (6.1,-2) {$S_{\eta-\beta\gamma+2}$};
  \node at (7.5,-2) {$\dots$};
  \node at (9,-2) {$S_{\eta+1}$};
  %\node at (2.5,-3) {$V_1$};
  %\node at (7.5,-3) {$V_2$};
%  
  \draw (5.5,-2) -- (5.5,2);
% lewa obwódka od lewego dolnego
  \draw[rounded corners=0.35cm] (-0.5,-1.25) -- (2.5,-1.25) --
    (2.5,-0.25) -- (1.4,-0.05) -- (1.4,1.25) -- (0.6,1.25) --
    (0.6,0.45) -- (-0.5,0.20) -- cycle;
% prawa obwódka od lewego dolnego
  \draw[rounded corners=0.35cm] (2.7,-1.25) -- (5.35,-1.25) --
    (5.35,1.25) -- (1.6,1.25) -- (1.6,0.25) -- (2.7,0.05) -- cycle;
  \node at (3.5,1.5) {$(\rho k+x)$ vertices};
  \node[rotate=40] at (0,0.85) {$r_2$ vertices};
\end{tikzpicture}
    \caption{An exemplary illustration of the input of the \textsc{Equitable $(L,{\mathcal D}_{d-1})$-colouring} algorithm.}
    \label{fig:my_label}
\end{figure}
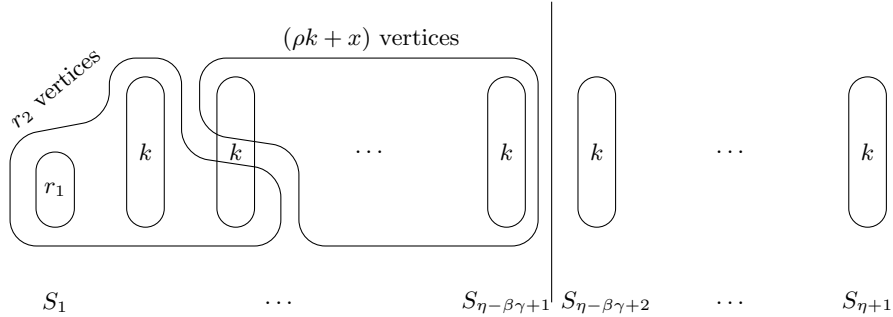

\SetKwInOut{Input}{Input}
\SetKwInOut{Output}{Output}
\setlength{\intextsep}{0pt}
\begin{algorithm}\label{al:1}
\caption{\textsc{Equitable $(L,{\mathcal D}_{d-1})$-colouring}($G$)}
\Input {Graph $G$ on $n$ vertices;  $L$ - $t$-uniform list assignment; a  $(k,d)$-partition $S_1\cup \cdots \cup S_{\eta+1}$ of $G$, given by lists $S_1=(x^1_1,\ldots,x^1_{r_1})$ and $S_j=(x^j_1,\ldots,x^j_k)$ for $j\in \{2, \ldots ,\eta+1\}$.}
\Output{Equitable $(L,{\mathcal D}_{d-1})$-colouring of $G$.}
initialization\;
$S:=empty$; $L_R:=empty$; $L_X:=empty$;\\
\For{$j:=1$ \textbf{to} $\eta+1$}{ 
add \textsc{reverse}$(S_j)$ to $S$; 
\hfill//\textsc{reverse} is the procedure for reversing lists\label{procedureReverse}}
$\beta:=\left\lceil n/t\right\rceil$-1;\\
\eIf {$n \equiv0 \pmod t$}{$r_2:=t$}{$r_2:=n \pmod t;$}
$\gamma := t \div k$; $r:= t \pmod k$;
$\rho:=\beta r \div k$; $x:=\beta r \pmod k$;\\
take and delete $r_2$ elements from the beginning of $S$, and add them, vertex \\
\hspace{2cm} by vertex, to list $L_R$; \label{r2}\\
\textsc{colour\_List}$(L_R,r_2)$; \label{colour-list-one}\\
take and delete $x$ elements from the beginning of $S$, and add them, vertex\\
\hspace{2cm} by vertex, to list $L_X$ ;\\
\textsc{colour\_List}$(L_X,x)$; \label{colour-list-two}\\
$S_{col}:=L_X$;\\
\For{$j=1$ \textbf{to} $\rho$}{\label{procedureOne-for-one}
take and delete $k$ elements from the beginning of $S$, and add them, vertex\\ 
\hspace{2cm} by vertex, to list $S'$;\\
\textsc{colour\_List}$(S',k)$; \label{colour-list-three}\\
$S_{col}:=S_{col} + S'$;}
\textsc{Reorder}($S_{col}$); \label{reorder}\\
$\overline{S}:=S$; \hfill //an auxiliary list\\ \label{modify-colour-one}
\textsc{Modify\_colourLists}$(S_{col},\overline{S})$; \label{modify-list}\\
\textsc{colour\_List}$(S,\gamma k)$;\label{colour-list-four}\\
%\textbf{return} colour function $c$ specified for all vertices of $G$;\\
\end{algorithm}
\setlength{\intextsep}{0pt}
\begin{algorithm}\renewcommand{\algorithmcfname}{Procedure}
\caption{\textsc{colour\_List}($S',p$)}\label{colour-list}
\Input {List $S'$ of vertices; integer $p$.//the length of $S'$ is multiple of $p$}
\Output {$L$-colouring of the vertices from $S'$.\\//The procedure also modifies a global variable of the list assignment $L$.
}
initialization\;
\While{$S'\neq empty$}{\label{procedureTwo-while-one}
let $S''$ be the list of the $p$ first elements of $S'$;\\
$C:=\emptyset$; \hfill //set $C$ is reserved for the colours being assigned to vertices of $S''$\\
\While{$S''\neq empty$}{\label{procedureTwo-while-two}
let $v$ be the first element of $S''$;\label{vdef}\\
$L(v):=L(v) \backslash C$;\\
$c(v):=$\textsc{colour\_Vertex}$(v)$; \\
delete vertex $v$ from $S'$ and $S''$; 
$C:=C\cup \{c(v)\}$;\\
}
}
%\textbf{return} colour function $c$ for vertices of $S'$;\\

\end{algorithm}
\setlength{\intextsep}{0pt}
\begin{algorithm}\renewcommand{\algorithmcfname}{Procedure}
\caption{\textsc{colour\_Vertex}($v$)} \label{colour-vertex}
\Input{Vertex $v$ of the graph $G$.}
\Output{$L$-colouring of the vertex $v$.\\//The procedure modifies also a global variable of the list assignment $L$.}
initialization\;
$c(v):=$ any colour from $L(v)$;\\
delete $c(v)$ from $L(w)$ for all $w \in N_G^{col}(d,v)$;\label{delete} \hfill //$d$ is a global variable\\
\textbf{return} $c(v)$;\\
\end{algorithm}
\setlength{\intextsep}{0pt}
\begin{algorithm}\renewcommand{\algorithmcfname}{Procedure}
\caption{\textsc{Reorder}($S'$)}
\Input {List $S'$ of coloured vertices of $G$.}
\Output{List $S'$ - reordered in such a way that every its sublist of length $k$ is formed by vertices being coloured with different colours.}
initialization\;
$S_{aux}:=empty$; \hfill//an auxiliary list\\
take and delete $r_1$ elements from $S'$, and add them, vertex by vertex, to $S_{aux}$;\\
\For{$j=1$ \textbf{\emph{to}} $\eta-\beta \gamma$}{
$P:=\emptyset$;\\
take and delete first $k$ elements from $S'$, and add them to set $P$;\\
\For{$i=1$ \textbf{\emph{to}} $k$ \textbf}{
let $v$ be a vertex from $P$ such that $c(v)$ is different from colours of the last $k-1$ vertices of $S_{aux}$; 
add $v$ to the end of $S_{aux}$;\\
}
}
$S':=S_{aux}$;\\
%\textbf{return} $S'$;\\
\end{algorithm}
\setlength{\intextsep}{0pt}
\begin{algorithm}\renewcommand{\algorithmcfname}{Procedure}
\caption{\textsc{Modify\_colourLists}($L_1,L_2$)}\label{modify-colour}
\Input {List $L_1$ of $\beta r$ coloured vertices and list $L_2$ of $\beta \gamma k$ uncoloured vertices.}
\Output {Modified colour list assignment $L$ for vertices of $L_2$.\\ //$L$ is a global variable
}
initialization\;
$C:=\emptyset$;\hfill //$C$ is a set of colours of vertices from the depicted part of $L_1$\\
\For{$i=1$ \textbf{\emph{to}} $\beta$}{
take and delete first $r$ vertices from $L_1$;\\ let $C$ be the set of colours assigned to them;\\
\For{$j=1$ \textbf{\emph{to}} $\gamma k$}{
let $v$ be the first vertex from $L_2$;\\
$L(v):=L(v)\backslash C$;
delete $v$ from $L_2$;\\
}
}
\end{algorithm}
Now we illustrate \textsc{Equitable $(L,{\mathcal D}_{d-1})$-colouring} using a graph constructed in Example \ref{ex:1}.
\begin{example}\label{ex:1} Let $G_1,G_2$ be two vertex-disjoint copies of $K_5$ and $V(G_i)=\{v_1^i,\ldots,$ $v_5^i\}$ for $i\in \{1,2\}$.  We join every vertex $v_j^2$ to $v_j^1,v_{j+1}^1, \ldots, v_5^1$ for $j\in\{1,2,3,4,5\}$. Next, we add a vertex $w^i_j$ and join it with $v^i_j$ for $i\in\{1,2\}\;j\in\{1,2,3,4,5\}$. In addition, we join $w^i_j$ to arbitrary two  vertices in $\{v^q_p:(q<i)\vee (q=i\wedge p<j)\}\cup \{w^q_p:(q<i)\vee (q=i\wedge p<j)\}$, $i\in\{2,3,4,5\}\;j\in\{1,2\}$. Let $G$ be a resulted graph. Observe that $|V(G)|=20$ and the  partition $S_1\cup S_2\cup\ldots \cup S_{10}$ of $V(G)$ such that $S_{p+1}$$=\{v^{s+1}_{r+1},w^{s+1}_{r+1}\}$ for $p\in \{0,\ldots, 9\}$, where $s=\left\lfloor \frac{p}{5}\right\rfloor, r\equiv p \pmod 5$ is a  $(2,3)$-partition of $G$. 
\begin{figure}
    \centering
\begin{tikzpicture}
  \matrix (net)
    [matrix of nodes,%
    nodes in empty cells,
    nodes={outer sep=0pt,circle,minimum size=4pt,draw},
    column sep={2.5cm,between origins},
    row sep={0.6cm,between origins}]
  {
      & & &  \\
      & & &  \\
      & & &  \\
      & & &  \\
      & & &  \\
  };
  \foreach \a in {1,...,5}
    \node[ left=0.2cm] at (net-\a-1) {$w_\a^1$};
   \foreach \a in {1,...,5}
    \node[ right=0.2cm] at (net-\a-4) {$w_\a^2$};
 \foreach \a in {1,...,5}
    \node[below left=0.05cm] at (net-\a-2) {$v_\a^1$};
    \foreach \a in {1,...,5}
    \node[below right=0.05cm] at (net-\a-3) {$v_\a^2$};
    
  \node[above=0.2cm] at (net-1-2) {$K_5$};
  \node[above=0.2cm] at (net-1-3) {$K_5$};
 
 \begin{scope}[on background layer]
    \foreach \a in {2,3}
    \node [rectangle,draw,rounded corners=0.65cm,inner sep=0.6cm,
      fit=(net-1-\a) (net-2-\a) (net-3-\a) (net-4-\a)
      (net-5-\a)] {};
  \end{scope};
  \draw (net-1-1) -- (net-2-1);
  \draw (net-2-1) -- (net-3-1);
  \draw (net-3-1) -- (net-4-1);
  \draw (net-4-1) -- (net-5-1);
  %\draw (net-1-1) -- (net-5-1);
%
  \draw (net-1-4) -- (net-2-4);
  \draw (net-2-4) -- (net-3-4);
  \draw (net-3-4) -- (net-4-4);
  \draw (net-4-4) -- (net-5-4);
\foreach \a in {1,...,5}
  \draw (net-\a-2) -- (net-\a-1);
 \foreach \a in {1,...,5}
  \draw (net-\a-3) -- (net-\a-4); 
 \draw (net-1-2) -- (net-2-1);
 \draw (net-2-2) -- (net-3-1);
 \draw (net-3-2) -- (net-4-1);
 \draw (net-4-2) -- (net-5-1);
 \draw (net-2-4) -- (net-1-3);
 \draw (net-3-4) -- (net-2-3);
 \draw (net-4-4) -- (net-3-3);
 \draw (net-5-4) -- (net-4-3);
% 
%\draw (net-1-1) to [out=45, in=315](net-5-1);
%\draw (net-1-4) to [out=135, in=225](net-5-4);
%
\foreach \b in {1,...,5}
 \foreach \a in {\b,...,5}
   \draw (net-\b-3) -- (net-\a-2);
\node at (net-1-1) {$2$};
\node at (net-2-1) {$2$};
\node at (net-3-1) {$2$};
\node at (net-4-1) {$3$};
\node at (net-5-1) {$3$};
 \node at (net-1-2) {$1$};
 \node at (net-2-2) {$1$};
 \node at (net-3-2) {$1$};
 \node at (net-4-2) {$2$};
 \node at (net-5-2) {$2$};
 \node at (net-1-3) {$3$};
 \node at (net-2-3) {$2$};
 \node at (net-3-3) {$1$};
 \node at (net-4-3) {$1$};
 \node at (net-5-3) {$1$};
 \node at (net-1-4) {$1$};
 \node at (net-2-4) {$4$};
 \node at (net-3-4) {$4$};
 \node at (net-4-4) {$2$};
 \node at (net-5-4) {$4$};
\end{tikzpicture}
    \caption{An exemplary graph $G$ depicted in Example \ref{ex:1} with an exemplary colouring returned by \textsc{Equitable $(L,{\mathcal D}_{d-1})$-colouring}($G$).}
    \label{ex:alg}
\end{figure}

For the purpose of Example \ref{ex:1}, we assume the following 3-uniform list assignment for the graph from Figure \ref{ex:alg}: $L(v^i_j)=\{1,2,3\}$, for $i\in \{1,2,3,4\}$, $L(w^1_j)=\{2,3,4\}$, and $L(w^2_j)=\{1,2,4\}$, $j\in \{1,2,3,4,5\}$, while given $(2,3)$-partition of $G$ is:  $S_p=\{w^{s+1}_{r+1},v^{s+1}_{r+1}\}$, where $s=\left\lfloor \frac{p}{5}\right\rfloor, r\equiv p \pmod 5$, $p\in[10]$. 

Thus \textsc{Equitable $(L,{\mathcal D}_{d-1})$-colouring} returns equitable $(L,\mathcal{D}_2)$-coloring of $G$. Note, that $20=|V(G)|=\eta \cdot k+r_1=9\cdot 2 +2$. While on the other hand, we have $20=|V(G)|=\beta \cdot t +r_2=6 \cdot 3+2$. Observe that $x=0$. When we colour a vertex, we always choose the first colour on its list.
The list $S$ determined in lines 3-5 of \textsc{Equitable $(L,{\mathcal D}_{d-1})$-colouring} and the colours assigned to first part of vertices of $S$ (lines  18-25) are as follows:
$$
\begin{array}{rc|c|c}
     S=&(v_1^1,w_1^1,& v_2^1,w_2^1,v_3^1,w_3^1,v_4^1,w_4^1,&v_5^1,w_5^1,v_1^2,w_1^2,\ldots,v_5^2,w_5^2)  \\
     &r_2 &\rho k&\beta \gamma k\\ 
     \mbox{\emph{ colours:}} & 1 \ \ \ 2 & 1\ \ \ 2\ \ \ 1^*\ \ \ 2\ \ \ 2\ \ \ 3\ \ & \\
    \end{array}
$$
\noindent $^*$: after colouring $v_3^1$ with 1, $L(v_1^2)=\{2,3\}$ - the result of line \ref{delete} in the \textsc{colour\_Ver\-tex} procedure. 

List $S_{col}$ after \textsc{Reorder}{($S_{col}$)}: $(v_2^1,w_2^1,v_3^1,w_3^1,w_4^1,v_4^1)$ with corresponding colours: $(1,2,1,2,3,2)$.

$$
\begin{array}{*{13}c}
   \overline{S}= & (v_5^1, &w_5^1, & v_1^2, & w_1^2,& v_2^2, & w_2^2,& v_3^2, & w_3^2,& v_4^2, & w_4^2,& v_5^2, &w_5^2) \\\hline
\rm{lists\ after} & 2& 2& 3 & 1& 2& 2& 1& 1& 1& 1& 1& 1 \\
\rm{procedure} & 3 & 3 &  & 4 &3 & 4 & 3 & 4 & 2 & 2 & 3 & 4 \\
\textsc{Modify\_colourList}& & 4 & & & &  & & & & 4 & & \\\hline
{\rm final}\ c(v) & 2 & 3 & 3 & 1 & 2 & 4 & 1 & 4 & 1 & 2 & 1&4\\
\end{array}
$$
\end{example}

To prove the correctness of the \textsc{Equitable $(L,{\mathcal D}_{d-1})$-colouring} algorithm,  we give some observations and  lemmas.
\begin{observation}\label{obs:al}
The colour function $c$ returned by \textsc{Equitable $(L,{\mathcal D}_{d-1})$-col\-ouring} is constructed step by step. In each step, $c(v)$ is a result of {\rm \textsc{colour\_Ver\-tex}{($v$)}} and this value is not changed further. 
\end{observation}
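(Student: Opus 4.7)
The plan is to verify the observation by direct bookkeeping: trace through every procedure in the algorithm and check (i) that the only place where the map $c$ is ever defined on a vertex is inside \textsc{colour\_Vertex}, and (ii) that for each $v\in V(G)$ the call \textsc{colour\_Vertex}$(v)$ is executed exactly once during a run of \textsc{Equitable $(L,{\mathcal D}_{d-1})$-colouring}$(G)$.

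For (i), I would go through the four auxiliary procedures. \textsc{colour\_Vertex}$(v)$ contains the single assignment $c(v):=$ any colour from $L(v)$, and this is the only assignment to $c$ anywhere in the pseudocode; in particular \textsc{Reorder} only rearranges a list of vertices that are already coloured (it touches $S_{aux}$, $P$ and $S'$ but never $c$), and \textsc{Modify\_colourLists} only removes colours from the list assignment $L$ (it modifies the global $L$ but never $c$). \textsc{colour\_List} itself does not assign to $c$ directly; it merely invokes \textsc{colour\_Vertex} inside its inner while-loop. So the value $c(v)$, once set, persists.

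For (ii), I would observe that \textsc{colour\_Vertex} is called only from line 8 of \textsc{colour\_List}, and inside that inner loop the current vertex $v$ is immediately deleted from both $S'$ and $S''$ before the loop iterates. Hence each list argument handed to \textsc{colour\_List} has each of its vertices coloured exactly once. It then remains to note that the main algorithm feeds the vertices of $V(G)$ into \textsc{colour\_List} in four disjoint batches (the list $L_R$ at line \ref{colour-list-one}, the list $L_X$ at line \ref{colour-list-two}, the $\rho$ successive $k$-blocks $S'$ at line \ref{colour-list-three}, and the final tail $S$ at line \ref{colour-list-four}), and that these batches together exhaust the list $S$ built in lines 3--5, which contains every vertex of $G$ exactly once. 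Consequently every vertex is processed by \textsc{colour\_Vertex} exactly once, and its colour is never subsequently changed.

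The statement has no real combinatorial content; the only potential obstacle is being systematic enough not to miss a hidden reassignment. The subtle point that deserves attention is the role of \textsc{Modify\_colourLists}, which \emph{does} alter a global variable — namely $L$, not $c$ — so one must be careful to distinguish modifications to list assignments (which are intended and permitted) from modifications to the partial colouring $c$ (which the observation forbids). Once this distinction is made, the claim follows directly from inspection of the pseudocode.
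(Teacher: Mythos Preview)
Your verification is correct and thorough. In the paper this statement is given as an \emph{observation} without any accompanying proof; it is treated as immediate from inspection of the pseudocode, so your bookkeeping argument is strictly more detailed than what the paper offers.
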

\begin{observation}\label{obs:al:1}
The list assignment $L$, as a part of the input of \textsc{Equitable $(L,{\mathcal D}_{d-1})$-colouring}, is modified for a vertex $v$ by \textsc{colour\_List}  or by \textsc{Modify\_colourLists}.
\end{observation}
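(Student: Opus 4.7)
The plan is to prove Observation \ref{obs:al:1} by a direct code inspection of the algorithm and its four subroutines, checking line by line where the global variable $L$ can be altered. Since the observation is purely about the flow of control and the data operations on $L$, no graph-theoretic argument is needed; the task reduces to enumerating every assignment of the form $L(\cdot):=\cdots$ that appears in the pseudocode and verifying that each such assignment is reachable only from within \textsc{colour\_List} or \textsc{Modify\_colourLists}.

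First, I would scan the main routine \textsc{Equitable $(L,\mathcal{D}_{d-1})$-colouring}. The only operations it performs on the inputs are list bookkeeping (building $S$, $L_R$, $L_X$, $S'$, $S_{col}$, $\overline{S}$) and calls to the subprocedures; no line of the main routine directly assigns to $L$. Next, I would look at \textsc{Reorder}, which manipulates only the ordering of the list $S_{col}$ and accesses colours $c(v)$ but never $L(v)$, so it leaves $L$ untouched. This already narrows the potential modifiers of $L$ to the three remaining procedures \textsc{colour\_List}, \textsc{colour\_Vertex}, and \textsc{Modify\_colourLists}.

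Finally, I would observe that both \textsc{colour\_List} and \textsc{Modify\_colourLists} contain the explicit assignment $L(v):=L(v)\setminus C$, which is a direct modification of the global variable. The subprocedure \textsc{colour\_Vertex} also modifies $L$ on line \ref{delete} (deleting $c(v)$ from the list of every $w\in N_G^{col}(d,v)$); however, \textsc{colour\_Vertex} is invoked only from inside the inner while-loop of \textsc{colour\_List}, so each modification produced by \textsc{colour\_Vertex} is attributable to the enclosing call of \textsc{colour\_List}. Combining the cases, no other code path can alter $L$, establishing the observation.

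I do not expect a genuine obstacle here — the statement is a straightforward structural property of the pseudocode rather than a mathematical claim — and the main care needed is simply to be exhaustive in the case analysis over the four procedures so that no hidden modification (in particular the indirect one through \textsc{colour\_Vertex}) is missed.
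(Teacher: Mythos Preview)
Your proposal is correct and matches the intended reasoning: the paper states this as an observation without proof, precisely because it follows from the direct code inspection you carry out. Your care in noting that the modification inside \textsc{colour\_Vertex} is subsumed by the enclosing \textsc{colour\_List} call is exactly the point that justifies the wording of the observation.
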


\begin{lemma}\label{lem:1} Every time when \textsc{Equitable $(L,{\mathcal D}_{d-1})$-colouring}$(G)$ calls  {\rm \textsc{col\-our\_Ver\-tex}{($v$)}}, the list $L(v)$ for $v$ is non-empty, i.e. {\rm \textsc{colour\_Vertex}{($v$)}} is always executable.
\end{lemma}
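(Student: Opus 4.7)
My plan is to show that at the moment \textsc{colour\_Vertex}$(v)$ is called, the number of distinct colours already removed from $L(v)$ is at most $t-1$; since $|L(v)|=t$ initially, this gives $|L(v)|\ge1$. The three places where $L(v)$ can shrink are: (i) the line $L(v):=L(v)\setminus C$ inside \textsc{colour\_List} when $v$ itself is the processed vertex; (ii) line~\ref{delete} of \textsc{colour\_Vertex}$(w)$ for some already coloured $w\in N(v)$ with $v\in N_G^{col}(d,w)$; and (iii) the line $L(v):=L(v)\setminus C$ inside \textsc{Modify\_colourLists}. Denote by $R_1(v), R_2(v), R_3(v)$ the corresponding sets of removed colours. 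Two tools underpin the proof: (a)~every $c\in R_2(v)$ is witnessed by at least $d$ already coloured neighbours of $v$ of colour $c$, so $|R_2(v)|\le\lfloor\pi(v)/d\rfloor$, where $\pi(v)$ is the number of $v$'s coloured neighbours at the time of processing; and (b)~since $S$ is obtained by reversing each $S_j$, when $v=x_i^j$ is being processed all vertices of $S_1\cup\cdots\cup S_{j-1}$ and of $\{x_{i+1}^j,\ldots ,x_k^j\}$ have already been coloured, so by~(\ref{part-cond}) one has $\pi(v)\le(di-1)+(k-i)$.

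A crucial structural fact I would establish first is an \emph{alignment}: from $n=r_1+\eta k$ and $n=\beta\gamma k+(\rho k+x)+r_2$ one gets $r_2+x\equiv r_1\pmod k$, so the prefix $L_R\cup L_X$ of $S$ consists of exactly the reversals of $S_1,\ldots ,S_{j^*}$ for some $j^*$. Consequently each of the $\rho$ Stage~C sublists equals a single $S$-class, and each of the $\beta$ Stage~D sublists equals the union of $\gamma$ consecutive $S$-classes. This alignment makes the case analysis clean.

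The proof then splits into four cases according to which \textsc{colour\_List} call processes $v$. Case~A ($v\in L_R$) is immediate: every coloured vertex lies in $v$'s sublist, so $R_2\subseteq R_1$ and $|R_1\cup R_2\cup R_3|=|R_1|\le r_2-1\le t-1$. In Cases~B and~C ($v\in L_X$ or $v$ in one of the $\rho$ Stage~C sublists), alignment places $v$ in a single $S$-class $S_{j^*}$, and all coloured neighbours of $v$ outside the current sublist lie in $S_1\cup\cdots\cup S_{j^*-1}$ together with, in Case~B, the tail $\{x_{x+1}^{j^*},\ldots ,x_k^{j^*}\}$ of $S_{j^*}$ that is covered by $L_R$. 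A direct count using~(\ref{part-cond}) and tool~(a) then gives $|R_1\cup R_2|\le k-1\le t-1$.

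The main obstacle is Case~D, where all three of $R_1,R_2,R_3$ contribute. Let $G_v^\star=S_{j_1}\cup\cdots\cup S_{j_\gamma}$ be $v$'s Stage~D sublist with $v=x_i^{j_m}$ and $j_m=j_1+m-1$, so $\sigma(v)-1=mk-i$. The procedure \textsc{Reorder} forces the $r$ colours constituting $R_3(v)$ to be pairwise distinct; and because the $\gamma k+r=t$ vertices of $v$'s equivalence set $E_v:=G_v^\star\cup A_{i_v}$ will receive $t$ pairwise distinct colours, $R_1\cap R_3=\emptyset$ and $|R_1\cup R_3|=(mk-i)+r$. Any $c\in R_2\setminus(R_1\cup R_3)$ must be witnessed by at least $d$ coloured neighbours of $v$ outside $E_v$, that is inside $(S_1\cup\cdots\cup S_{j_1-1})\setminus A_{i_v}\subseteq S_1\cup\cdots\cup S_{j_m-1}$; by~(\ref{part-cond}) their number is at most $di-1$, giving $|R_2\setminus(R_1\cup R_3)|\le\lfloor(di-1)/d\rfloor=i-1$. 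Summing, $|R_1\cup R_2\cup R_3|\le(mk-i)+r+(i-1)=mk+r-1\le\gamma k+r-1=t-1$, which is tight when $m=\gamma$. This delicate accounting is where the roles of \textsc{Reorder} and \textsc{Modify\_colourLists} are precisely calibrated so that the three contributions together stay within $t-1$.
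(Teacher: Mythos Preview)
Your proof is correct and follows essentially the same four-case strategy as the paper's proof: both split according to which \textsc{colour\_List} call processes $v$ and bound the total number of colours removed from $L(v)$ by $t-1$, using the $(k,d)$-partition inequality~(\ref{part-cond}) together with the pigeonhole fact that each removed colour from neighbour-colouring requires $d$ witnesses. Your explicit ``alignment'' observation (that $r_2+x\equiv r_1\pmod k$, so the prefix $L_R\cup L_X$ and every subsequent sublist of $S$ line up with $S$-class boundaries) is something the paper uses only implicitly in its Cases~2--4; making it explicit is a genuine expository improvement, and your $R_1,R_2,R_3$ bookkeeping together with the observation $R_1\cap R_3=\emptyset$ in Case~D is tidier than the paper's additive bound $r+(k-i)+(z-1)$, though both routes yield the identical inequality $mk+r-1\le t-1$.
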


\begin{proof}
Note that  \textsc{colour\_Vertex} is called by \textsc{colour\_List}.  
Let 

\noindent $R=\{v\in V(G): \textsc{colour\_Vertex}{(v)} \mbox{ is called when }\; \textsc{colour\_List}(L_R,r_2) \linebreak \mbox{ in line \ref{colour-list-one}}\;   \mbox{of }\;  \textsc{Equitable} (L,{\mathcal D}_{d-1})\textsc{-colouring}  \mbox{ is executed}\}$, 

\noindent $X=\{v\in V(G): \textsc{colour\_Vertex}{(v)}  \mbox{ is called when }\; \textsc{colour\_List}(L_X,x) \linebreak  \mbox{ in line \ref{colour-list-two}}\;   \mbox{of }\;  \textsc{Equitable} (L,{\mathcal D}_{d-1})\textsc{-colouring}  \mbox{ is executed}\}$.

\noindent Let $V_1:=S_1\cup \cdots \cup S_{\eta+1-\beta\gamma}$, $V_2:=V(G)\setminus V_1=S_{\eta+1-(\beta\gamma-1)}\cup \cdots \cup S_{\eta+1}$. Note that

\noindent $V_1\setminus (R \cup X)=\{v\in V(G):$ \textsc{colour\_Vertex}${(v)}$ is called when 
\textsc{colour\_List}$(S',k)$ in line \ref{colour-list-three} of \textsc{Equitable} $(L,{\mathcal D}_{d-1})$
\textsc{-colouring} is executed$\}$

\noindent $V_2=\{v\in V(G):$ \textsc{colour\_Vertex}${(v)}$ is called when  \textsc{colour\_List}$(S,\gamma k)$ in line \ref{colour-list-four} of \textsc{Equitable} $(L,{\mathcal D}_{d-1})$\textsc{-colouring}  is executed$\}$.

\noindent Observe that $|R|=r_2,|X|=x, |V_1\setminus (R\cup X)|=\rho k, |V_2|=\beta\gamma k$.

\noindent {\it Case} 1. $v\in  R$. 

In this case, the vertex $v$ is coloured by \textsc{colour\_List}{($L_R,r_2$)} in line \ref{colour-list-one} of \textsc{Equitable $(L,{\mathcal D}_{d-1})$-colouring}. Since $|R|=r_2$, the {\bf while} loop  in line \ref{procedureTwo-while-one} of \textsc{colour\_List} is executed only once. The {\bf while} loop in line \ref{procedureTwo-while-two} of  \textsc{colour\_List} is executed $r_2$ times. Suppose, $v$ is a vertex such that \textsc{colour\_Vertex}{($v$)} is called in the $i$-th execution of the {\bf while} loop in line \ref{procedureTwo-while-two} of \textsc{colour\_List}. By Observation \ref{obs:al:1}, the fact that \textsc{Equitable $(L,{\mathcal D}_{d-1})$-colouring} has not  called \textsc{Modify\_colourLists} so far, and because it is the first time when \textsc{colour\_List} works, we have $|C|=i-1$, and $L(v)\setminus C$ is the current list of $v$. Since $t\ge r_2$, the list of  $v$ is non-empty.

\noindent {\it Case} 2. $v\in X$.

This time, the vertex $v$ is coloured by \textsc{colour\_List}{($L_X,x$)} called in line \ref{colour-list-two} of \textsc{Equitable $(L,{\mathcal D}_{d-1})$-colouring}. Similarly as in \emph{Case} 1, the {\bf while} loop  in line \ref{procedureTwo-while-one} of \textsc{colour\_List} is executed only once and the {\bf while} loop in line \ref{procedureTwo-while-two} of \textsc{colour\_List} is executed $x$ times. Suppose that $v$ is a vertex such that \textsc{colour\_Vertex}{($v$)} is called in the $i$-th  iteration of the {\bf while} loop in line \ref{procedureTwo-while-two} of \textsc{colour\_List}.
Observe that properties of the  $(k, d)$-partition $S_1\cup \cdots \cup S_{\eta+1}$ of $G$ (given as the input of  \textsc{Equitable $(L,{\mathcal D}_{d-1})$-colouring}) and the \textsc{Reverse} procedure from line \ref{procedureReverse} of \textsc{Equitable $(L,{\mathcal D}_{d-1})$-colouring} imply that $v$ has at most $(x-i+1)d-1+(k-x)$ neighbors $w$ for which  \textsc{colour\_Vertex}{($w,d$)} was executed earlier than \textsc{colour\_List}{($L_X,x$)}.  More precisely, by the definition of the $(k, d)$-partition, $v$ has at most  $(x-i+1)d-1$ neighbours in $R\setminus Y$, where $Y$ consists of the last  $k-x$  vertices $w$ for which  \textsc{colour\_Vertex}{($w$)} was executed (being called  by \textsc{colour\_List}{($L_R,r_2$)}).
Thus, at most $k-i$ colours were deleted from $L(v)$ before \textsc{colour\_List}{($L_X,x$)}  began. If the {\bf while} loop in line \ref{procedureTwo-while-two} of \textsc{colour\_List} is called for the $i$-th time, then $|C|=i-1$ and so, from the current list $L(v)$ at most $i-1$ elements were deleted. Furthermore, \textsc{Equitable $(L,{\mathcal D}_{d-1})$-colouring} has not called  \textsc{Modify\_colourLists} so far. Thus the current size of $L(v)$ is at least $t-k+1$, by Observation \ref{obs:al:1}. Since $t\geq k$, the list of $v$ is non-empty.

\noindent {\it Case} 3. $v\in V_1\setminus (R\cup X)$. 

In this case, the vertex $v$ is coloured during the execution of the {\bf for} loop in line \ref{procedureOne-for-one} of \textsc{Equitable $(L,{\mathcal D}_{d-1})$-colouring}. Observe that this loop is executed $\rho$ times, and
in each of the executions the {\bf while} loop  in line \ref{procedureTwo-while-one} of \textsc{colour\_List} is executed only once, while the {\bf while} loop in line \ref{procedureTwo-while-two} of \textsc{colour\_List} is executed $k$ times. 

Suppose that $v$ is a vertex such that \textsc{colour\_Vertex}{($v$)} is called in the $j$-th execution of the {\bf for} loop in line \ref{procedureOne-for-one} of \textsc{Equitable $(L,{\mathcal D}_{d-1})$-colouring}  and  the $i$-th execution of the {\bf while} loop in line \ref{procedureTwo-while-two} of \textsc{colour\_List}. Observe that properties of the  $(k, d)$-partition $S_1\cup \cdots \cup S_{\eta+1}$ of $G$ (given as input of   \textsc{Equitable $(L,{\mathcal D}_{d-1})$-colouring}) and the  \textsc{Reverse} procedure from line \ref{procedureReverse} of \textsc{Equitable $(L,{\mathcal D}_{d-1})$-colouring} imply that $v$ has at most $(k-i+1)d-1$  neighbors $w$ for which \textsc{colour\_Vertex}{($w$)} was executed before the $j$-th execution of the {\bf for} loop in line \ref{procedureTwo-while-one} of  \textsc{Equitable $(L,{\mathcal D}_{d-1})$-colouring} started. Thus from the list of $v$ at most $k-i$ colours were deleted before the $j$-th execution of the {\bf for} loop in line \ref{procedureTwo-while-one} of \textsc{Equitable $(L,{\mathcal D}_{d-1})$-colouring}  started. If the {\bf while} loop in line \ref{procedureTwo-while-two} of \textsc{colour\_List} is called for the $i$-th time, $|C|=i-1$ and so, from the current list of $v$ at most $i-1$ elements were deleted. Furthermore, \textsc{Equitable $(L,{\mathcal D}_{d-1})$-colouring} has not  called  \textsc{Modify\_colourLists} so far. Thus the current size of the list $L(v)$ is at least $t-k+1$, by Observation \ref{obs:al:1}. Since $t\geq k$, the list of $v$ is non-empty.

\noindent {\it Case} 4.  $v\in V_2$. 

In this case, the vertex $v$ is coloured by the \textsc{colour\_List}{($S,\gamma k$)} procedure, called by \textsc{Equitable $(L,{\mathcal D}_{d-1})$-colouring} in line \ref{colour-list-four}.
Since $|V_2|=\beta\gamma k$, the {\bf while} loop in line \ref{procedureTwo-while-one} of \textsc{colour\_List} is executed $\beta$ times and the {\bf while} loop in line \ref{procedureTwo-while-two} of \textsc{colour\_List} is executed $\gamma k$ times. Suppose that $v$ is a vertex  for which  \textsc{colour\_Vertex}{($v,d$)} is called in the $j$-th execution of the {\bf while} loop in line \ref{procedureTwo-while-one}  of \textsc{colour\_List} and in the $z$-th execution of the {\bf while} loop in line \ref{procedureTwo-while-two} of \textsc{colour\_List}. Let  $z=yk+i$, where $0\le y\le \gamma -1,\; 0\le i\le k$.

Similarly as before, properties of the  $(k, d)$-partition $S_1\cup \cdots \cup S_{\eta+1}$ of $G$ and the \textsc{Reverse} procedure  imply that $v$ has at most $(k-i+1)d-1$  neighbors $w$ for which \textsc{colour\_Vertex}{($w$)} was executed before the $j$-th execution of the {\bf while} loop in line \ref{procedureTwo-while-one} of  \textsc{colour\_List} began. Thus from the list of $v$ at most $k-i$ elements were deleted before the $j$-th execution of the {\bf while} loop in line \ref{procedureTwo-while-one} of \textsc{colour\_List}  started. Furthermore, after the execution of  \textsc{Modify\_colourLists} in line \ref{modify-colour-one} from the list of every vertex in $V_2$ at most $r$ elements were  removed.  
If the {\bf while} loop in line \ref{procedureTwo-while-two} of \textsc{colour\_List} is called for the $z$-th time, then $|C|=z-1$ and so, from the initial  list $L(v)$ at most $z-1$ elements were deleted. Thus, when \textsc{colour\_Vertex}{($v$)} is called, then the size of the current list $L(v)$ is at least $t-r-(k-i)-(z-1)=t-r-k-yk+1$. Since $y\le \gamma-1$ and $t=\gamma k+r$, the list of $v$ is non-empty.
\hfill$\Box$
\end{proof}

\begin{lemma}\label{com:col}
An output of {\rm \textsc{Equitable $(L,{\mathcal D}_{d-1})$-colouring}($G$)}  is an $(L,{\mathcal D}_{d-1})$-colouring of $G$.
\end{lemma}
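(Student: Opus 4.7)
The statement requires two things: (a) the output $c$ assigns to every vertex $v$ a colour from its original list $L(v)$, and (b) for every colour $\alpha$, the subgraph $G[C_\alpha]$ induced by the colour class $C_\alpha=\{v\in V(G):c(v)=\alpha\}$ belongs to $\mathcal{D}_{d-1}$. My plan is to dispatch (a) quickly from what has already been proved, and to reduce (b) to a single invariant about the dynamically updated list assignment.

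For (a), I first verify that every vertex of $G$ is processed by some call to \textsc{colour\_Vertex}. A counting argument on the four blocks of the algorithm gives $r_2+x+\rho k+\beta\gamma k = r_2+\beta(r+\gamma k)=r_2+\beta t=n=|V(G)|$, so the lists $L_R$, $L_X$, the successive $S'$ inside the \textbf{for} loop and the final $S$ together exhaust $V(G)$. By Lemma~\ref{lem:1}, whenever \textsc{colour\_Vertex}($v$) is invoked the current $L(v)$ is non-empty, so $c(v)$ is well-defined; since \textsc{colour\_List}, \textsc{colour\_Vertex} and \textsc{Modify\_colourLists} only remove elements from lists (never add), the value $c(v)$ belongs to the original $L(v)$, and once fixed it is never changed (Observation~\ref{obs:al}).

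For (b), the plan is to exploit the only place where degeneracy-related deletions occur, namely line~\ref{delete} of \textsc{colour\_Vertex}, and to track the following invariant throughout the execution of the algorithm: \emph{if $w$ is an uncoloured vertex that has at least $d$ already-coloured neighbours of colour $\alpha$, then $\alpha\notin L(w)$}. To establish the invariant, I consider the moment a vertex $u$ first causes a neighbour $w$ to accumulate $d$ neighbours of colour $c(u)=\alpha$; at that moment $w\in N_G^{col}(d,u)$, so line~\ref{delete} removes $\alpha$ from $L(w)$, and since the algorithm never reinserts colours into lists, $\alpha$ remains absent from $L(w)$ for as long as $w$ is uncoloured.

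Given the invariant, I order each class $C_\alpha$ by the time at which its vertices are coloured, say $v_1,v_2,\dots,v_m$. For any $v_i$, at the instant \textsc{colour\_Vertex}($v_i$) assigns $\alpha$ to $v_i$ one has $\alpha\in L(v_i)$ (current list), hence by the invariant $v_i$ had fewer than $d$ already-coloured neighbours bearing colour $\alpha$; that is, at most $d-1$ of $v_1,\dots,v_{i-1}$ are neighbours of $v_i$ in $G$. This is precisely a back-degree at most $d-1$ ordering on $G[C_\alpha]$, which witnesses $G[C_\alpha]\in\mathcal{D}_{d-1}$. The main subtlety, and the step I would be most careful about, is checking that the invariant really is preserved across all four blocks of the algorithm: the deletions performed inside \textsc{colour\_List} (via the set $C$) and inside \textsc{Modify\_colourLists} only shrink lists further and so cannot break it, while the deletions performed inside \textsc{colour\_Vertex} are exactly what maintain it. Combining (a) and (b) yields that $c$ is an $(L,\mathcal{D}_{d-1})$-colouring of $G$, completing the proof.
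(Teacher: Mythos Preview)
Your proof is correct and follows essentially the same approach as the paper's. The paper establishes the degeneracy by arguing, by contradiction, that when \textsc{colour\_Vertex}($v$) runs, $v$ cannot already have $d$ previously coloured neighbours of colour $c(v)$, because line~\ref{delete} would have removed that colour from $L(v)$ when the $d$-th such neighbour was processed; you phrase the same mechanism as an explicit invariant and then read off a back-degree ordering of each colour class. Your part (a), with the counting $r_2+x+\rho k+\beta\gamma k=n$ to confirm every vertex is reached, makes explicit what the paper compresses into its reference to Observation~\ref{obs:al} and Lemma~\ref{lem:1}.
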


\begin{proof}
We will show that if \textsc{colour\_Vertex}{($v$)} is executed, then an output $c(v)$ has always the following property. For each subgraph  $H$ of $G$ induced by vertices $x$ for which \textsc{colour\_Vertex}{($x,d$)} was  executed  so far with the output $c(x)=c(v)$, the condition $\delta(H)\le d-1$ holds.   By Observation \ref{obs:al} and Lemma \ref{lem:1}, it will imply that an output $c$ of \textsc{Equitable $(L,{\mathcal D}_{d-1})$-colouring} is  an $(L,{\mathcal D}_{d-1})$-colouring of $G$. Note that it is enough to show this fact for $H$ satisfying $v\in V(H)$.

By a contradiction, let $v$ be a vertex for which the output $c(v)$  does not satisfy the condition, i.e. $v$ has at least $d$ neighbors in the set of vertices for which \textsc{colour\_Vertex} was already executed with the output $c(v)$. But it is not possible because $c(v)$ was removed from $L(v)$ when the last (in the sense of the algorithm steps)  of the neighbors of $v$, say $x$, obtained the colour  $c(v)$ (\textsc{colour\_Vertex}{($x$)} removed $c(x)$ from $L(v)$ since $v\in N_G^{col}(d,x)$ in this step). \hfill$\Box$
\end{proof}

\begin{lemma}\label{com:eq}
An output colour function $c$ of {\rm \textsc{Equitable $(L,{\mathcal D}_{d-1})$-colouring}($G$)} satisfies $|C_i|\le \left\lceil |V(G)|/t\right\rceil$, where $t$ is the part of the input of \textsc{Equitable $(L,{\mathcal D}_{d-1})$-colouring}$(G)$ and $C_i=\{v\in V(G):\; c(v)=i\}$.
\end{lemma}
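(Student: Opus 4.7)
The plan is to partition $V(G)$ into $\beta+1=\lceil|V(G)|/t\rceil$ blocks in such a way that, within each block, all colours assigned by the algorithm are pairwise distinct. Once this is done, every colour class $C_i$ can meet a block in at most one vertex, and therefore $|C_i|\le \beta+1$, which is exactly the inequality we need.

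The natural choice of blocks is as follows. I will take $B_0:=L_R$; then $|B_0|=r_2\le t$ and, since the call $\textsc{colour\_List}(L_R,r_2)$ enters its outer \textbf{while} loop only once and maintains the running set $C$ of colours already used in that round, the $r_2$ vertices of $B_0$ receive pairwise distinct colours. For $i\in\{1,\ldots,\beta\}$ I will let $B_i$ consist of the $r$ vertices taken from the head of $S_{col}$ during the $i$-th iteration of \textsc{Modify\_colourLists} together with the $\gamma k$ vertices coloured in the $i$-th pass of the outer \textbf{while} loop of $\textsc{colour\_List}(S,\gamma k)$ in line~\ref{colour-list-four}. Then $|B_i|=r+\gamma k=t$ and $|B_0|+\beta t=r_2+\beta t=|V(G)|$, so the $B_i$ form a partition of $V(G)$.

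To verify that each $B_i$ with $i\ge 1$ carries pairwise distinct colours I will combine three ingredients. First, the $r$ vertices of $B_i$ drawn from $S_{col}$ occupy $r$ consecutive positions of the list produced by \textsc{Reorder}; since \textsc{Reorder} guarantees that any $k$ consecutive entries of $S_{col}$ have pairwise distinct colours and $r\le k$, these $r$ colours are distinct. Second, the $\gamma k$ vertices of $B_i$ coming from the line~\ref{colour-list-four} call of $\textsc{colour\_List}(S,\gamma k)$ are processed in one pass of its outer loop and therefore receive $\gamma k$ distinct colours by the same $C$-mechanism. Third, the two parts of $B_i$ share no colour because the $i$-th iteration of \textsc{Modify\_colourLists} removes precisely the colour set of the first part from the lists of all $\gamma k$ vertices of the second part before any of them is ever passed to \textsc{colour\_Vertex}; Lemma~\ref{lem:1} ensures that the shortened lists are still nonempty, so valid choices exist.

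The step I expect to be the main obstacle is the first of the three ingredients above: I need to justify carefully that the $i$-th $r$-chunk of the reordered $S_{col}$ does sit inside a window of $k$ consecutive entries, so that the guarantee produced by \textsc{Reorder} is actually applicable; this ultimately rests on $r=t\bmod k\le k$. The degenerate situation $r=0$ (i.e.\ $k\mid t$) is immediate, since then $S_{col}$ is empty, \textsc{Modify\_colourLists} has no effect, and the blocks $B_1,\ldots,B_\beta$ are simply the $\beta$ batches of $\gamma k=t$ distinctly coloured vertices handled by $\textsc{colour\_List}(S,\gamma k)$. \hfill$\Box$
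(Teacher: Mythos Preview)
Your proposal is correct and follows essentially the same route as the paper's own proof: the paper also partitions $V(G)$ into $\beta+1$ sets $W_1,\ldots,W_{\beta+1}$, where $W_{\beta+1}=R$ (your $B_0$) and each $W_i$ with $i\le\beta$ is the union of the $i$-th block of $\gamma k$ vertices handled by \textsc{colour\_List}$(S,\gamma k)$ with the $i$-th run of $r$ consecutive entries of the reordered $S_{col}$, and then argues exactly via your three ingredients (the \textsc{Reorder} guarantee combined with $r<k$, the $C$-mechanism inside \textsc{colour\_List}, and the deletion performed by \textsc{Modify\_colourLists}). The only cosmetic difference is the indexing of the distinguished block ($B_0$ versus $W_{\beta+1}$).
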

\begin{proof}
Recall that $\left\lceil |V(G)|/t\right\rceil=\beta+1$. We will show that there exists a partition of $V(G)$ into $\beta+1$ sets, say $W_1\cup \cdots \cup W_{\beta+1}$, such that for each $i\in \{1, \ldots ,\beta+1\}$  any two vertices $x,y$ in $W_i$ satisfy $c(x)\neq c(y)$. It will imply that the cardinality of every colour class in $c$ is at most $\beta+1$, giving the assertion.

Note that after the last, $\rho$-th execution of the {\bf for} lopp in line \ref{procedureOne-for-one} of \textsc{Equitable $(L,{\mathcal D}_{d-1})$-colouring} the list $S_{col}$ consists of the coloured vertices  of the set $V_1\setminus R$ (observe that $|V_1\setminus R|=\beta r$). The elements of $S_{col}$ are ordered in such a way that the first $x$ ones have different colours and for every $i\in \{1, \ldots ,\rho\}$ the $i$-th next $k$ elements  have different colours. Now  the \textsc{Reorder}($S_{col}$) procedure in line  \ref{reorder}  of \textsc{Equitable $(L,{\mathcal D}_{d-1})$-colouring} changes the ordering of elements of $S_{col}$ in such a way that  every $k$ consecutive elements have different colours. Since $r \in \{0,\ldots,k-1\}$, it follows that also every $r$ consecutive elements of this list have different colours. The execution of \textsc{Reorder}($S_{col}$) is always possible because of  the previous assumptions on $S_{col}$.

For $i\in \{1, \ldots ,\beta\}$ let $H_i=S_{\eta+1-((\beta-i+1)\gamma-1)}\cup S_{\eta+1-((\beta-i+1)\gamma-2)}\cup \cdots \cup S_{\eta+1-(\beta-i)\gamma}$. Thus $H_1\cup \cdots \cup H_{\beta}$ is a partition of $V_2$ into $\beta$ sets, each of the cardinality $\gamma k$. Note that the vertices of $H_i$ are coloured when \textsc{colour\_List}($S,\gamma k$)  in line \ref{colour-list-four} of  \textsc{Equitable $(L,{\mathcal D}_{d-1})$-colouring} is executed. More precisely, it is during the $i$-th execution of the {\bf while} loop in line \ref{procedureTwo-while-one} of \textsc{colour\_List}. It guarantees that the vertices of $H_i$ obtain pairwise different colours. Moreover, in line \ref{modify-list} of  \textsc{Equitable $(L,{\mathcal D}_{d-1})$-colouring} the lists of vertices of $H_i$ were modified in such a way that  the colours of $i$-th $r$ elements from the current list $S_{col}$ are removed from the list of each element in $H_i$. Hence, after the execution of \textsc{colour\_List}($S,\gamma k$)  in line \ref{colour-list-four} of  \textsc{Equitable $(L,{\mathcal D}_{d-1})$-colouring} the elements in $H_i$ obtain colours that are pairwise different and also different from all the colours of $i$-th $r$ elements from the list $S_{col}$ (recall that $S_{col}$ consists of the ordered vertices of $V_1\setminus R$). Hence, for every $i\in \{1, \ldots ,\beta\}$ the elements of $H_i$ and the $i$-th $r$ elements of $S_{col}$ have pairwise  different colours in $c$ and can constitute $W_i$. Moreover, the elements of $R$  constitute $W_{\beta+1}$. Thus $|W_{\beta+1}|=r_2$, which finishes the proof.\hfill$\Box$
\end{proof}

\begin{theorem}
For a given graph $G$ on $n$ vertices, a $t$-uniform list assignment $L$, a  $(k,d))$-partition  of $G$ the
{\rm \textsc{Equitable $(L,{\mathcal D}_{d-1})$-colouring}($G$)} algorithm returns $(L,{\mathcal D}_{d-1})$-colouring of $G$ in polynomial time.
\end{theorem}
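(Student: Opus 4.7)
The plan is to combine Lemmas \ref{lem:1}, \ref{com:col}, and \ref{com:eq} to establish correctness and equitability, and then estimate the cost of each line of \textsc{Equitable $(L,{\mathcal D}_{d-1})$-colouring} and of each auxiliary procedure in order to bound the overall running time by a polynomial in $n$.

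First I would note that Lemma \ref{lem:1} guarantees that every invocation of \textsc{colour\_Vertex}$(v)$ is well-defined, because the current list $L(v)$ is non-empty whenever a colour has to be chosen; hence every vertex of $G$ is eventually coloured and the output $c$ is a mapping $V(G)\to\mathbb N$ with $c(v)\in L(v)$. Lemma \ref{com:col} then shows that each colour class induces a $(d-1)$-degenerate subgraph, so $c$ is an $(L,{\mathcal D}_{d-1})$-colouring, while Lemma \ref{com:eq} shows that no colour class exceeds $\lceil n/t\rceil = \beta+1$ vertices, so $c$ is moreover equitable. Together these three lemmas give the first part of the statement.

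For the running time I would walk through the algorithm one block at a time. Constructing $S$ from the given $(k,d)$-partition together with all the calls to \textsc{reverse} costs $O(n)$, and the integer arithmetic used to compute $\beta$, $r_2$, $\gamma$, $r$, $\rho$, $x$ is $O(1)$. A single invocation of \textsc{colour\_Vertex}$(v)$ picks one element of $L(v)$ and then, for each $w\in N_G^{col}(d,v)\subseteq N_G(v)$, removes $c(v)$ from $L(w)$; this costs $O(\deg_G(v)\cdot t)$ in the worst case under a straightforward representation of the lists. Since \textsc{colour\_Vertex} is called exactly once per vertex, the total contribution of all such calls is $O\bigl(t\sum_{v\in V(G)}\deg_G(v)\bigr)=O(t\,|E(G)|)=O(tn^2)$. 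The bookkeeping inside \textsc{colour\_List} (maintaining the auxiliary set $C$ and trimming the list by $C$ before the next call to \textsc{colour\_Vertex}) adds only $O(t)$ per vertex, hence $O(nt)$ in total.

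Finally, \textsc{Reorder} iterates over the $\beta r+r_1 = O(n)$ coloured vertices of $S_{col}$ and, for each new vertex appended to $S_{aux}$, scans the last $k-1$ entries of $S_{aux}$ and the current set $P$ of $k$ candidates; this is $O(nk)$ time, and feasibility is ensured by the partition of $S_{col}$ into blocks coloured with pairwise different colours produced by lines~\ref{colour-list-two}--\ref{colour-list-three}. The procedure \textsc{Modify\_colourLists} deletes at most $r$ colours from each of the $\beta\gamma k\le n$ lists of $V_2$, contributing $O(nr)=O(nt)$ operations. Summing everything we obtain a running time polynomial in $n$ (with $k$, $d$, $t$ bounded by $n$), which completes the argument. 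The main obstacle is purely that of careful bookkeeping: the combinatorial heart of the proof already lies in Lemmas \ref{lem:1}, \ref{com:col} and \ref{com:eq}, so no further inductive or extremal argument is needed.
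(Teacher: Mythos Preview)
Your proof follows essentially the same route as the paper: correctness via Lemmas~\ref{lem:1}, \ref{com:col}, \ref{com:eq}, and a line-by-line running-time analysis of the main algorithm and its subroutines. The only noticeable difference is in the bookkeeping of \textsc{colour\_Vertex}: the paper charges $O(\Delta^2(G))$ per call (accounting for the fact that determining $N_G^{col}(d,v)$ requires, for each neighbour $w$ of $v$, scanning $N_G(w)$ to count occurrences of $c(v)$), arriving at $O(n\Delta^2(G))$ overall, whereas your $O(\deg_G(v)\cdot t)$ estimate covers the list deletions but glosses over how $N_G^{col}(d,v)$ is computed. This does not affect the conclusion---the time is still polynomial---but if you want a tight bound you should either add the $\Delta(G)$ factor for that scan or mention that counters ``number of neighbours of $w$ coloured $i$'' are maintained incrementally.
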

\begin{proof}
The correctness of the algorithm follows immediately from  Lemmas \ref{com:col}, \ref{com:eq}. 
To determine the complexity of the entire \textsc{Equitable $(L,{\mathcal D}_{d-1})$-colouring} $(G)$, let us analyze first its consecutive instructions:
\begin{itemize}
    \item creating the list $S$ - the {\bf for} loop in line \ref{procedureReverse} can be done in $O(n)$ time
    \item   \textsc{colour\_List}{($L_R,r_2$)} in line \ref{colour-list-one} 
    
    This procedure recalls \textsc{colour\_Vertex}{($v,d$)} in which the most costly operation is the one depicted in line \ref{delete} of \textsc{colour\_Vertex}. Since, we colour $r_2$ initial vertices we do not need to check $N_G^{col}(d,v)$ for first $d$ vertices. If $r_2\leq d$, then the cost of \textsc{colour\_List}{($L_R,r_2$)} is $O(r_2)$, otherwise $O(r_2 \Delta^2(G))$. 
    %dla każdego wierzchołka $v$ sprawdzamy jego wszystkich sąsiadów i dla każdego takiego sąsiada sprawdzamy czy ma $d$ sąsiadów pokolorowanych $c(v)$*/
    \item  \textsc{colour\_List}{($L_X,x$)} in line \ref{colour-list-two} 
    
    Similarly as in the previous case, the complexity of this part can be bounded by $O(x\Delta^2(G))$.
    \item the {\bf for} loop in line \ref{procedureOne-for-one}
    
    This loop is executed $\eta$ times, $\rho \leq \eta \leq \lceil n/k \rceil$. The complexity of the internal \textsc{colour\_List}{($S',k$)} procedure is $O(k\Delta^2(G))$. So, we have $O(n\Delta^2(G))$ in total.
    \item \textsc{Reorder}{($S_{col}$)} in line \ref{reorder}
    
    Since $(\eta - \beta \gamma)k\leq n$, then we get $O(n)$.
    
    \item  \textsc{Modify\_colourList}{($S_{col}, \overline{S}$)}
    
    The double {\bf for} loop implies the complexity $O(\beta \gamma k)=O(n)$. 
    \item   \textsc{colour\_List}{($S,\gamma k$)}
    
    The list $S$ is of length $\beta \gamma k$, so the {\bf while} loop in line \ref{procedureTwo-while-one} of \textsc{colour\_List} is executed $\beta$ times, while the internal {\bf while} loop is executed $\gamma k$ times. Taking into account the complexity of \textsc{colour\_Vertex}, we get $O(n\Delta^2(G))$.
\end{itemize}

We get $O(n\Delta^2(G))$ as the complexity of the entire \textsc{Equitable $(L,{\mathcal D}_{d-1})$-colouring}$(G)$. \hfill$\Box$
\end{proof}

\section{Grids}\label{grids}
Given two graphs $G_1$ and $G_2$, the \emph{Cartesian product} of $G_1$ and $G_2$, $G_1 \square G_2$, is defined to be a graph whose the vertex set is
$V(G_1) \times V(G_2)$ and the edge set consists of all  edges joining vertices $(x_1,y_1)$ and $(x_2,y_2)$ when either $x_1=x_2$ and $y_1y_2 \in E(G_2)$ or $y_1=y_2$ and $x_1x_2\in E(G_1)$. Note that the Cartesian product is commutative and associative. Hence the graph $G_1 \square \cdots \square G_d$ is unambiguously defined for any $d \in \mathbb{N}$. Let $P_n$ denote a path on $n$ vertices. If each factor $G_i$ is a path on at least two vertices then $G_1 \square \cdots \square G_d$ is a $d$-\emph{dimensional grid} (cf. Fig.~\ref{p34}). Note that the $d$-dimensional grid $P_{n_1}\square \cdots \square P_{n_d}$, $d \geq 3$, may be considered as $n_1$ layers and each layer is the $(d-1)$-dimensional grid $P_{n_2}\square \cdots \square P_{n_d}$. We assume $n_1 \geq \cdots \geq n_d$.

\begin{figure}[htb]
\begin{center}
\includegraphics[scale=1]{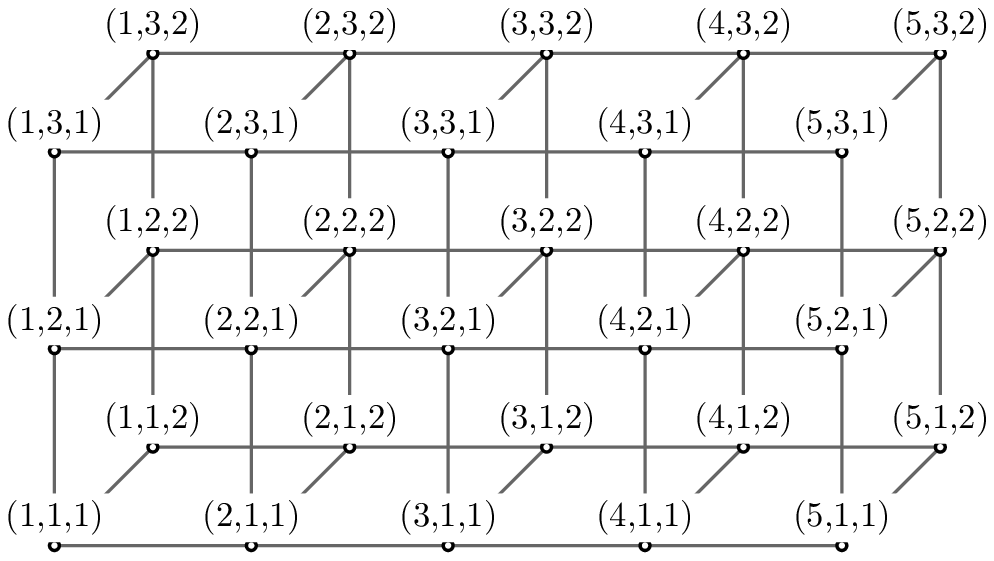}
\caption{The 3-dimensional grid $P_5 \square P_3 \square P_2$.}
\label{p34}
\end{center}
\end{figure}
%For the next theorem we need notation of an incomplete grid. 

Let $P_{n_1} \sqsupset \ldots \sqsupset P_{n_d}$ denote an \textit {incomplete} $d$-dimensional grid, i.e. a connected graph 
being a subgraph of $P_{n_1} \square \ldots \square P_{n_d}$ such that its some initial layers may be empty, the first  non-empty 
layer may be incomplete, while any next layer is complete (cf. Fig.~\ref{incom}). Note that every grid is particular incomplete grid.
%and we assume as the first ''incomplete'' layer the subgraph induced by vertices with first coordinate equal to 1. 
\begin{figure}[htb]
\centering
\begin{tikzpicture}

\def \dx{2};
\def \dy{2};
\def \dz{2};
\def \nbx{5};
\def \nby{3};
\def \nbz{2};

\foreach \x in {2,...,\nbx} {
    \foreach \y in {1,...,\nby} {
        \foreach \z in {1,...,\nbz} {
            \node at (\x*\dx,\y*\dy,\z*\dz)
            [circle, minimum width=5pt, fill, draw, inner sep=0pt]{};
        }
    }
}
\node at (2,2,2) [circle, minimum width=5pt, fill, draw, inner sep=0pt]{};
\node at (2,4,2) [circle, minimum width=5pt, fill, draw, inner sep=0pt]{};
\node at (2,6,2) [circle, minimum width=5pt, fill, draw, inner sep=0pt]{};
\node at (2,4,4) [circle, minimum width=5pt, fill, draw, inner sep=0pt]{};
\draw (2,2,2)--(2,4,2); 
\draw (2,4,2)--(2,6,2);
\draw (2,4,2)--(2,4,4);

% y lines
\foreach \x in {2,...,\nbx} {
    \foreach \z in {1,...,\nbz}{
        \draw (\x*\dx,\dy,\z*\dz) -- ( \x*\dx,\nby*\dy,\z*\dz);
    }
}
% z lines
\foreach \x in {2,...,\nbx} {
    \foreach \y in {1,...,\nby}{
        \draw (\x*\dx,\y*\dy,\dz) -- ( \x*\dx,\y*\dy,\nbz*\dz);
    }
}

% x lines
\foreach \y in {1,...,\nby} {
    \foreach \z in {1,...,\nbz}{
        \draw (\dx+2,\y*\dy,\z*\dz) -- ( \nbx*\dx,\y*\dy,\z*\dz);
    }
}

\draw (2,2,2)--(4,2,2);
\draw (2,4,2)--(4,4,2);
\draw (2,6,2)--(4,6,2);
\draw (2,4,4)--(4,4,4);
\end{tikzpicture}
\caption{An incomplete grid $P_n \sqsupset P_3 \sqsupset P_2$, $n\geq 5$.}\label{incom} 
\end{figure}

In this subsection we construct a polynomial-time algorithm that for each $3$-dimensional grid  finds  its $(3,2)$-partition (\textsc{Partition3d}($G$)). Application of Theorem \ref{thm:1} implies the main result of this subsection. 
Note that using a completely different method, the first statement of Theorem \ref{main3} has already been proven in \cite{DrFrDy18}.

\begin{theorem}\label{main3}
Let $t\geq 3$ be an integer. Every $3$-dimensional grid is equitably  $(t,{\mathcal D}_1)$-choosable. Moreover, there is a polynomial-time algorithm that for every $t$-uniform list assignment $L$ of the $3$-dimensional grid $G$ returns an equitable $(L,{\mathcal D}_{1})$-colouring of $G$.
\end{theorem}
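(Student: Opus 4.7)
The first assertion of Theorem \ref{main3} will follow immediately from Theorem \ref{thm:1} applied with $k=3$ and $d=2$, provided one can show that every 3-dimensional grid admits a $(3,2)$-partition. The algorithmic statement then follows by combining the algorithm of Theorem \ref{thm:1} with a polynomial-time procedure \textsc{Partition3d}($G$) that outputs such a partition. The plan is therefore to construct \textsc{Partition3d}.

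I would adopt an iterative peeling strategy: at each stage, maintain an \emph{incomplete} 3-dimensional grid (in the sense defined above) and identify a triple $\{x_1^j, x_2^j, x_3^j\}$ forming the next (currently highest-indexed) block of the partition. Condition (\ref{part-cond}) requires $x_1^j$ to have at most $1$ neighbour outside the triple, $x_2^j$ at most $3$, and $x_3^j$ at most $5$; since the maximum degree of a 3D grid is $6$, the bound on $x_3^j$ is automatic as soon as $x_3^j$ has a triple-neighbour, so the genuine restriction is on the first and second vertex. A natural template for the triple is a convex corner of the first non-empty layer together with its two in-layer neighbours along the remaining axes, locally of the form $\{(1,1,1),(1,1,2),(1,2,1)\}$: here $x_1^j=(1,1,1)$ has exactly one outside neighbour (the one in the next layer), and the other two each have outside degree $3$. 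A short case analysis supplies concrete admissible triples for the other local shapes the current corner can take after previous peels have eaten part of the layer. After removing such a triple, the remaining graph is verified to be again an incomplete 3D grid, so the recursion continues.

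The main obstacle is the bookkeeping: one must enumerate the possible shapes of the frontier of the first non-empty layer (itself an incomplete 2-dimensional grid, by the recursive definition) and, for each, exhibit an admissible peel whose removal yields another incomplete 3D grid. Special base cases arise when the first non-empty layer has fewer than three vertices---then the peel must combine the remaining vertices of that layer with one or two vertices from the next layer---and when the total remaining graph has at most three vertices, which are placed in $S_1$ with no condition to verify. Because the grid dimensions satisfy $n_1 \geq n_2 \geq n_3 \geq 2$, the 3D neighbourhood structure always provides enough slack to achieve the degree bounds. Each peel removes three vertices in constant work, so the algorithm runs in $O(|V(G)|)$ time; invoking Theorem \ref{thm:1} then yields both assertions of Theorem \ref{main3}.
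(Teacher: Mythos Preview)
Your proposal is correct and follows essentially the same route as the paper: reduce to exhibiting a $(3,2)$-partition, obtain it by iteratively peeling a corner triple from the current incomplete 3-dimensional grid (the paper's \textsc{Partition3d} does exactly this via its \textsc{Corner} subroutine, with a three-way case split on the degree of the corner vertex), and then invoke Theorem~\ref{thm:1}. Your ``natural template'' $\{(a_1,a_2,a_3),(a_1,a_2,a_3+1),(a_1,a_2+1,a_3)\}$ is precisely the paper's $\deg(y_1^j)=3$ case, and your remark that the $x_3^j$ bound is automatic once it has a neighbour inside the triple is the same observation used to dispatch the remaining cases.
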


\begin{algorithm}
\renewcommand{\algorithmcfname}{Procedure}
\caption{\textsc{Corner}($G$)}
\Input {Incomplete non-empty $d$-dimensional grid $G=P_{n_1} \sqsupset \ldots \sqsupset P_{n_d}$, $d \geq 2$.}
\Output {Vertex $y=(a_1,\ldots,a_d) \in V(G)$ such that $\deg_G(y)\leq d$. }
initialization\;
let $a_1$ be the number of the incomplete layer of $G$;\\
\For{$i=2$ \textbf{\emph{to}} $d-1$}{
$a_i=\min\{x_i: \exists_{x_{i+1},\ldots, x_d}(a_1,\ldots, a_{i-1}, x_i, \ldots,x_d)\in V(G)\}.$}
$a_d:=\min\{x_d: (a_1,\ldots, a_{d-1}, x_d)\in V(G)\}.$\\
\textbf{return} $(a_1,\ldots,a_d)$;
\end{algorithm}

\begin{algorithm}[htb]
\caption{\textsc{Partition3d}($G$)}\label{alg3d}
\Input{$3$-dimensional grid $G=P_{n_1} \square P_{n_2} \square P_{n_3}$.}
\Output {A $(3,2)$-partition $S_1 \cup \cdots \cup S_{\alpha+1}$ of $G$.}
initialization;\\
$\alpha:=\lceil \frac{n_1n_2n_3}{3} \rceil -1$;\\
\If {$\alpha \geq 1$}{\label{ifalpha}
\For{$j:=\alpha +1$ \textbf{\emph{downto}} $2$}{\label{firstfor}
$y_1^j=(a_1,a_2,a_3):=$\textsf{Corner}($G$);\label{firstcorner}\\
\If {$\deg(y_1^j)=1$}{\label{deg1}
$y_2^j:=$\textsf{Corner}($G - y_1^j$);\label{secondcorner}\\
\eIf {$y_1^j$ is the only vertex on $a_1$ layer}{
let $y_3^j$ be any vertex on layer $a_1+1$ such that $y_3^j \neq y_2^j$}
{
let $y_3^j$ be any vertex on layer $a_1$ such that $y_3^j \neq y_1^j$ and $y_3^j \neq y_2^j$, if exists, otherwise $y_3^j$ is any vertex on layer $a_1+1$}}
\If {$\deg(y_1^j)=2$}{\label{deg2}
let $y_2^j$ be the neighbour of $y_1^j$ lying on the same layer as $y_1^j$; \\
let $y_3^j$ be any vertex on layer $a_1$, if exists,
otherwise, $y_3^j$ is any vertex on layer $a_1+1$}
\If {$\deg(y_1^j)=3$}{\label{deg3}
$y_2^j:=(a_1,a_2,a_3+1)$; 
$y_3^j:=(a_1,a_2+1,a_3)$;
}
$S_j:=\{y_1^j,y_2^j,y_3^j\}$;
$G:=G-S_j$;\\
}
}
$S_1:=V(G)$;\label{rest}\\
\end{algorithm}

\begin{theorem}
For a given $3$-dimensional grid $G$ the 
{\rm {\textsc{Partition3d}($G$)}} algorithm returns
a $(3,2)$-partition of $G$ in polynomial-time.\label{3grid}
\end{theorem}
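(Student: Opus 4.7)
The approach is to track, by induction on the iteration $j$ of the main \textbf{for} loop (counting down from $\alpha+1$), two loop invariants: (a) the current graph $G$ (what remains after all previous removals) is an incomplete $3$-dimensional grid in the sense of Section~\ref{grids}, and (b) the sets $S_{j+1}, \ldots, S_{\alpha+1}$ already produced, with the orderings $x_1^{j'} := y_1^{j'}$, $x_2^{j'} := y_2^{j'}$, $x_3^{j'} := y_3^{j'}$, form the tail of a valid $(3,2)$-partition of the input grid. The key identity for verifying (b) is
$$|N_G(y_i^{j'}) \cap (S_1 \cup \cdots \cup S_{j'-1})| = \deg_{G^{(j')}}(y_i^{j'}) - |\{y_{i'}^{j'} : i'\ne i,\ y_{i'}^{j'}\in N_G(y_i^{j'})\}|,$$
where $G^{(j')}$ is the current graph at the start of iteration $j'$, because neighbors of $y_i^{j'}$ that lie in $S_{j'+1}\cup\cdots\cup S_{\alpha+1}$ have already been removed from $G^{(j')}$.

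First I will show that \textsc{Corner} is well-defined on any non-empty incomplete $3$-grid and always returns a vertex $y_1^j=(a_1,a_2,a_3)$ with $\deg_{G^{(j)}}(y_1^j)\le 3$. By the incomplete-grid structure, all layers below $a_1$ are empty, and the minimality of $(a_2,a_3)$ on layer $a_1$ excludes the would-be in-layer neighbors with smaller coordinates, leaving only the three candidates $(a_1+1,a_2,a_3)$, $(a_1,a_2+1,a_3)$, $(a_1,a_2,a_3+1)$; this matches the three branching cases in the algorithm. It is then easy to check in each branch that $y_1^j$ contributes at most $1$ to $|N_G(y_1^j)\cap(S_1\cup\cdots\cup S_{j-1})|$, as required for $x_1^j$.

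Next I handle the three branches in turn. The $\deg(y_1^j)=3$ case is cleanest: $y_2^j=(a_1,a_2,a_3+1)$ and $y_3^j=(a_1,a_2+1,a_3)$ are the two in-layer neighbors of $y_1^j$, are not adjacent to each other, and are themselves near-corner vertices of $G^{(j)}$ of degree at most $4$ and $5$ respectively; after subtracting one for adjacency to $y_1^j$, both satisfy the bounds $3$ and $5$ demanded by the $(3,2)$-partition condition. The cases $\deg(y_1^j)\in\{1,2\}$ split further depending on whether $y_3^j$ can still be placed on layer $a_1$ or must be moved to layer $a_1+1$; the latter happens exactly when layer $a_1$ is about to be (or has just been) exhausted, and in that case $y_3^j$ is automatically a corner-like vertex of the newly exposed, still complete, layer, so a direct degree count yields the required bound. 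Running in parallel with this, I will verify invariant (a): the algorithm always chooses vertices lexicographically minimal on the incomplete layer, so the remaining graph keeps its ``empty layers, one incomplete layer, complete layers'' shape and remains connected.

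The main obstacle I anticipate is the case analysis needed to establish invariant (a) in the fringe configurations: when only one or two vertices remain on the current incomplete layer, when $n_2$ or $n_3$ equals~$2$ (so that a layer is very thin), and when a removal could threaten connectivity of the next incomplete layer. Each algorithmic branch must be checked to ensure that the resulting remainder is still an incomplete $3$-grid of the prescribed form. Once invariants (a) and (b) are established, the terminal set satisfies $|S_1|=n_1n_2n_3-3\alpha\in\{1,2,3\}$ by the choice $\alpha=\lceil n_1n_2n_3/3\rceil-1$, so $S_1\cup\cdots\cup S_{\alpha+1}$ is a genuine $(3,2)$-partition. For polynomial time, the main loop executes $\alpha+1=O(n_1n_2n_3)$ times, each iteration calling \textsc{Corner} in $O(n_1n_2n_3)$ time, doing a constant number of degree queries, and removing three vertices, giving total $O((n_1n_2n_3)^2)$ running time.
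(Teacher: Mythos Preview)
Your approach is essentially the paper's: case-split on $\deg_{G^{(j)}}(y_1^j)\in\{1,2,3\}$ and verify in each branch that $|N_G(y_i^j)\cap(S_1\cup\cdots\cup S_{j-1})|\le 2i-1$, using the observation that \textsc{Corner} always returns a vertex of degree at most~$3$. The paper records exactly the bounds you compute (at most $1,3,5$ when $\deg(y_1^j)\in\{1,2\}$ and $1,3,4$ when $\deg(y_1^j)=3$) and does not bother to track your invariant~(a) explicitly.

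One point in your sketch needs correction. The algorithm does \emph{not} always choose lexicographically minimal vertices on the incomplete layer: in the $\deg\in\{1,2\}$ branches $y_3^j$ is explicitly ``any vertex'' on layer $a_1$ (or $a_1+1$), and in the $\deg=3$ branch $y_3^j=(a_1,a_2+1,a_3)$ need not be lex-minimal after $y_1^j,y_2^j$ are removed. So your stated justification of invariant~(a) does not go through. The correct argument is simpler: the branch definitions never remove a vertex from a layer above $a_1$ unless layer~$a_1$ is simultaneously exhausted in that iteration, so after each step the residual graph still has the shape ``empty initial layers, one possibly incomplete layer, then complete layers,'' and this is all \textsc{Corner} needs to return a vertex of degree $\le 3$ (connectivity of the residual is neither used nor always maintained, so you can drop it from the invariant). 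Finally, the paper asserts linear running time rather than your $O((n_1n_2n_3)^2)$; your bound is certainly valid but not sharp.
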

\begin{proof} First, observe that thanks to the condition of the \textbf{if} instruction in line \ref{ifalpha}, the \textbf{for} loop in line \ref{firstfor} is correctly defined and its instruction are performed every time for a graph with at least three vertices. Thus, the  \textsc{Corner} procedure in both line \ref{firstcorner} and line \ref{secondcorner} is called for a non-empty graph, so the returned vertices $y_1^j$ and $y_2^j$ are correctly defined. There is no doubt that the remaining instruction are also correctly defined, and, in consequence, we get sets $S_1,\ldots, S_{\alpha+1}$ fulfilling conditions: $|S_1|\leq 3$ and $|S_j| = 3$, $j \in \{2,\ldots,\alpha+1\}$. 

Hence, all we need is to prove that the partition $S_1 \cup \cdots \cup S_{\alpha+1}$ fulfills also the condition (\ref{part-cond}) in the definition of the $(k,s)$-partition of $G$. 
Indeed, let us consider set $S_j=\{y_1^j, y_2^j, y_3^j\}$. If the condition from  line \ref{deg1} of \textsc{Partition}{3d$(G)$} is true, then $\deg_{G-S_j}(y_1^j)=1$, $\deg_{G-S_j}(y_2^j)\leq 3$, while $\deg_{G- S_j}(y_3^j)\leq 5$. The same inequalities hold whenever the condition in line \ref{deg2} holds. If the condition in  line \ref{deg3} is true, then $\deg_{G- S_j}(y_1^j)\leq 1$, $\deg_{G-S_j}(y_2^j)\leq 3$, while $\deg_{G-S_j}(y_3^j)\leq 4$.

It is easy to see that the complexity of  \textsc{Partition}{3d$(G)$} is linear due to the number of vertices of $G$. \hfill$\Box$
\end{proof}

As a consequence of the above theorem and Theorem \ref{thm:1} we get the statement of Theorem \ref{main3}.

\section{Concluding remarks}
In Subsection \ref{subal} we have proposed the polynomial-time algorithm that finds an equitable $(L,{\mathcal D}_{d-1})$-colouring of a given graph $G$  assuming that we know a $(k,d)$-partition of $G$ ($L$ is a $t$-uniform list assignment for $G$, $t\geq k$). In this context the following  open question seems to be interesting:  What is the complexity of  recognition of graphs having a $(k,d)$-partition? 

\textbf{Acknowledgment.} The authors thank their colleague Janusz Dybizbański for making several useful suggestions improving the presentation.

%
% ---- Bibliography ----
%
% BibTeX users should specify bibliography style 'splncs04'.
% References will then be sorted and formatted in the correct style.
%
% \bibliographystyle{splncs04}
% \bibliography{mybibliography}
%

\end{document}